\providecommand{\U}[1]{\protect\rule{.1in}{.1in}}
\newtheorem{theorem}{Theorem}
\newtheorem{definition}[theorem]{Definition}
\newtheorem{lemma}[theorem]{Lemma}
\newtheorem{proposition}[theorem]{Proposition}
\newtheorem{remark}[theorem]{Remark}
\numberwithin{equation}{section}
\newcommand{\R}{\mathbb R}
\renewcommand{\d}{\mathrm d}
\newcommand{\vep}{\varepsilon}
\newcommand{\LAM}{}
\newcommand{\LAMuno}{1}
\begin{document}

\title[Porous medium equations with non-decreasing constraints]{Porous medium equation with a blow-up nonlinearity and a non-decreasing constraint}
\author{Goro Akagi}
\address{Mathematical Institute, Tohoku University, Aoba, Sendai 980-8578, Japan; Helmholtz Zentrum M\"{u}nchen, Institut f\"{u}r Computational Biology, Ingolst\"{a}dter Landstra\ss e 1, 85764 Neunerberg, Germany; Technische Universit\"{a}t M\"{u}nchen, Zentrum Mathematik, Bolzmannstra\ss e
3, D-85748 Garching bei M\"{u}nchen, Germany.}
\email{akagi@m.tohoku.ac.jp}
\author{Stefano Melchionna}
\address{University of Vienna, Faculty of Mathematics, Oskar-Morgenstern-Platz 1, 1090 Wien, Austria.}
\email{stefano.melchionna@univie.ac.at}

\thanks{\textbf{Acknowledgment.}\quad\textrm{GA is supported by JSPS KAKENHI Grant Number JP16H03946, JP16K05199, JP17H01095 and by the Alexander von Humboldt Foundation and by the Carl Friedrich von Siemens Foundation. SM  acknowledges the support  of the Austrian Science Fund (FWF) project P27052-N25.}}

\maketitle

\begin{abstract}
The final goal of this paper is to prove existence of local (strong) solutions to a (fully nonlinear) porous medium equation with blow-up term and nondecreasing constraint. To this end, the equation, arising in the context of Damage Mechanics, is reformulated as a mixed form of two different types of doubly nonlinear evolution equations. Global (in time) solutions to some approximate problems are constructed by performing a time discretization argument and by taking advantage of energy techniques based on specific structures of the equation. Moreover, a variational comparison principle for (possibly non-unique) approximate solutions is established and it also enables us to obtain a local solution as a limit of approximate ones.
\end{abstract}

\section{Introduction}

In Damage Mechanics, to describe evolution of a \emph{damage variable} $w(x,t)$ for $x \in \Omega \subset \R^d$ and $t > 0$, 
the following \emph{unidirectional gradient flow} of a free energy functional $\mathcal{E}(\cdot)$ defined on, e.g., $L^{2}(\Omega)$ is often used:
\[
\partial_{t}w(x,t)=\Big(-\partial\mathcal{E}(w(\cdot,t))\Big)_{+} \ \mbox{ in } L^2(\Omega), \quad 0<t<T,
\]
where $\partial\mathcal{E}$ stands for a
functional derivative (e.g., subdifferential) of $\mathcal{E}$ and
$$
(\,\cdot\,)_{+}:=\max\{\,\cdot\,,0\}\geq 0
$$
denotes the \emph{positive-part function}. Obviously, any solution $w(x,t)$ to this problem is non-decreasing in time. 
This feature represents unidirectional evolution of damaging phenomena; indeed, the degree of damage is never relaxed
spontaneously. There have already been many contributions to such unidirectional evolutions, starting with the unidirectional heat equation $\partial_{t}w=(\Delta w)_{+}$ (see, e.g.,~\cite{K-S80},~\cite{GiSa94,GiGoSa94} and also recent revisits~\cite{Liu14},~\cite{AkKi13}) and extensions to various nonlinear parabolic equations and systems (see, e.g.,~\cite{BeBi97},~\cite{Fre01,Fr1,Fr2},~\cite{LSS02},~\cite{NaNiPoSb03},~\cite{BoSc04,BoScSe05,BL06},~\cite{MiTh},~\cite{AsFrK,AK05},~\cite{BeDPNi05,Ni05,Ni06},~\cite{KRZ13,KRZ15},~\cite{Ak},~\cite{RoRo}). Such unidirectional evolution equations are attracting interest in view of Damage Mechanics as well as  from a purely mathematical viewpoint. Indeed, such problems cannot be classified in the most commonly studied classes of evolution equations due to their unique features. 

In particular, let us consider 
a unidirectional variant of the \emph{porous medium equation with a blow-up term}:
\begin{equation}
\partial_{t}w=\Big(\Delta w^{m}+w^{q}\Big)_{+}\ \text{in }\Omega
\times(0,\infty), \label{pde}%
\end{equation}
where $\Omega$ is a smooth bounded domain of $\mathbb{R}^{d}$, $1<m<\infty$, and $1<q<\infty$. In case $m=q$ and $d=1$, equation \eqref{pde} is proposed in~\cite{BaPr93} as a \emph{damage accumulation model} (see also~\cite{Kachanov})and also mathematically studied in~\cite{BeBi97}, where local (in time) existence of solution is proved and  the  long-time behavior of solutions is  investigated (in particular, regional blow-up phenomena occur for some class of initial data). We also refer the reader to~\cite{NaNiPoSb03,BeDPNi05,Ni05,Ni06,Ak}. In particular, the local existence result of~\cite{BeBi97} is extended for $d \geq 1$ in~\cite{Ak}. One may easily imagine that solutions to \eqref{pde} may blow up in finite time like solutions to equations without non-decreasing constraint. On the other hand,  the behavior  of solutions for small time, i.e., $t \ll 1$, may be strongly influenced by the non-decreasing constraint. Indeed, in case the initial datum $u_0$ fulfills $(\Delta u_0 + u_0^{q/m}) < 0$ in some part of domain, (smooth)  solutions $u(x,t)$ will not evolve immediately and stay as they are for a while. In this view, a sort of free boundary problem with respect to the boundary of the region $R(t) := \{x \in \Omega \colon \Delta u(x,t)+\gamma(u(x,t)) < 0\}$, where the solution $u(x,t)$ does not evolve, is implicitly encoded within equation \eqref{pde}. Therefore, it is not obvious in which regularity class solutions to the initial-boundary value problem for \eqref{pde} can be constructed, for there may arise loss of classical regularity of solutions on the free boundary. 

Equation \eqref{pde} is classified as a fully nonlinear parabolic equation. In general, fully
nonlinear equations are unfit for energy technique. On the other hand, \eqref{pde}
can be transformed into an evolution inclusion of subdifferential type, which
is fitter for energy methods. Indeed, set $u=w^{m}$. Then, \eqref{pde} is
rewritten as
\[
\partial_{t}u^{1/m}=\Big(\Delta u+u^{q/m}\Big)_{+}\ \text{in }\Omega
\times(0,\infty).
\]
Now, applying the (multi-valued) inverse mapping $\alpha(s):=s+\partial
I_{[0,+\infty)}(s)$ of the positive-part function $(s)_{+}$ on both sides, we
deduce that
\begin{equation*}
\partial_{t}u^{1/m}+\partial I_{[0,+\infty)}(\partial_{t}u^{1/m})\ni\Delta
u+u^{q/m}\ \text{\ in }\Omega\times(0,\infty),
\end{equation*}
where $\partial I_{[0,+\infty)}$ denotes the subdifferential operator of the
indicator function $I_{[0,+\infty)}:\mathbb{R}\rightarrow\lbrack0,\infty]$
supported over the half-line $[0,+\infty)$, i.e.~for $s \geq 0$,
\begin{align}
\partial I_{[0,+\infty)}(s)  &  =\{\eta\in\mathbb{R}\colon0\geq\eta
(\sigma-s)\text{ for all }\sigma\geq0\}\nonumber\\
&  =
\begin{cases}
\{0\} & \text{if }\ s>0,\\
(-\infty,0] & \text{if }\ s=0.
\end{cases}\label{subid}
\end{align}
Since $s\mapsto s^{1/m}$ is strictly increasing in $(0,+\infty)$ and $u(x,t) \geq 0$ is non-decreasing in time, we observe that
\begin{equation*}
\partial I_{[0,+\infty)}(\partial_{t}u^{1/m})=\partial I_{[0,+\infty
)}(\partial_{t}u)\quad\text{for }(x,t)\in\Omega\times(0,\infty).
\end{equation*}
%
%
\noindent
Thus, \eqref{pde} is reduced to
\begin{equation}
\partial_{t}u^{1/m}+\partial I_{[0,+\infty)}(\partial_{t}u)\ni\Delta
u+u^{q/m}\ \text{\ in }\Omega\times(0,\infty), \label{pde2}%
\end{equation}
which seems to be more tractable with energy techniques, since the
right-hand side  exhibits a  gradient structure.

Equation \eqref{pde2} can be regarded as a \emph{mixed type} of doubly nonlinear
evolution equations, which are extensively studied in the following typical forms:
\[
A(u_{t})+B(u)=0
\]
and
\[
\partial_{t}A(u)+B(u)=0
\]
with two nonlinear operators $A$ and $B$. The former one appears in the study
of generalized Ginzburg-Landau equations (see~\cite{Gurtin} and
also~\cite{A11} with references therein), unidirectional heat flow
(see~\cite{AkKi13}) and so on (see also \cite{Barbu75, Arai, CV, Senba, Colli,
St02, Roubicek, Segatti, MiRo, SSS, S08-BE, MiRoSt08}). The latter one represents nonlinear
diffusion equations, e.g., porous medium/fast diffusion equations and Stefan
problem. On the other hand, \eqref{pde2} is not reduced to such well-studied
classes of doubly nonlinear equations; indeed, it is formulated in the
abstract form,
\begin{equation}
\partial_{t}A_{1}(u)+A_{2}(u_{t})+B(u)=0 \label{mDNE}%
\end{equation}
with nonlinear operators $A_{1}$, $A_{2}$, and $B$. Such a mixed doubly
nonlinear evolution equation has not yet been fully  studied  except
in~\cite{Ak}, where $B$ is  assumed  to be linear (the linearity of $B$ requires $m = q$ in \eqref{pde}) and the linearity plays a crucial role in the analysis of~\cite{Ak}. On the contrary, \eqref{pde2} with $m \neq q$ corresponds to the case where  the three operators are simultaneously nonlinear. Additional difficulties in handling \eqref{pde2} derive from the \emph{unboundedness} of all the operators; indeed, we shall treat \eqref{pde2} in an $L^{2}$-framework, where equations \eqref{pde} and \eqref{pde2} are rigorously equivalent but the corresponding three operators above turn unbounded. In particular, subdifferential operators of indicator functions are essentially unbounded in any function spaces. From these points of view, \eqref{mDNE} is beyond the scope of previous  theory, and therefore, it is worth to develop a new theory to cover \eqref{mDNE}.

The present paper is concerned with the Cauchy-Neumann problem $(P)$:
\begin{alignat}{3}
 \partial_{t}\beta\left(  u\right)  +\xi &  =\LAM\Delta u+\gamma(u)
 \quad && \text{ in } \Omega\times(0,\infty), \label{target eq}\\
\xi &  \in\partial I_{[0,+\infty)}(\partial_{t}u) \quad && \text{ in } \Omega
\times(0,\infty), \label{xi}\\
\partial_{\nu}u  &  =0  && \text{ on } \partial\Omega\times(0,\infty), 
\label{BC}\\
u|_{t = 0} &  =u^{0}\  &&\text{ in } \Omega, \label{IC}%
\end{alignat}
where $\beta$ and $\gamma$ are monotone functions (in $\mathbb{R}$) such that $\beta$ is at most of affine growth and $\beta$ is exactly of $(p-1)$ power growth for $1 < p < +\infty$,
and $\partial_{\nu}$ stands for the outer normal derivative. As is shown above, equation \eqref{target eq} is equivalent to 
\begin{equation*}
 \partial_t \beta(u) = \Big( \LAM \Delta u + \gamma(u) \Big)_+ \ \mbox{ in } \Omega \times (0,\infty)
\end{equation*}
and also to a generalized form of \eqref{pde},
$$
 \partial_t w = \Big( \LAM \Delta \beta^{-1}(w) + \gamma \circ \beta^{-1}(w) \Big)_+ \ \mbox{ in } \Omega \times (0,\infty).
 $$
  The main purpose of the present paper is to prove local-in-time existence of solutions for $(P)$. Main result will be stated in Section \ref{S:result}. To prove the existence of local-in-time solutions for ${\normalsize (P)}$, in Section \ref{approx prob existence}, we shall introduce a new approximation for \eqref{target eq},
\begin{alignat}{3}
\partial_{t}\beta\left(  u\right)  +\xi+\mu\gamma(\partial_{t}u)  &
=\LAM\Delta u+\gamma(u)\quad && \text{ if }  \ p\geq2\text{,}  
\label{approx equation}\\
\partial_{t}\beta\left(  u\right)  +\xi+\mu\partial_{t}u  &  =\LAM\Delta u+\gamma(u) \quad && \text{ if }  \ 1<p<2  
\label{approx equation2}%
\end{alignat}
where $\mu$ is a positive parameter. Let us denote by $(P)_{\mu}$ the Cauchy-Neumann problem for \eqref{approx equation} or \eqref{approx equation2} along with \eqref{xi}, \eqref{BC}, and \eqref{IC}. Here, due to the approximation, we can expect existence of \emph{global-in-time} solutions, although the original equation \eqref{target eq} may exhibit blowing-up (in finite time) phenomena. We shall actually prove existence of global-in-time solutions to $(P)_{\mu}$ by introducing a time discretization of \eqref{approx equation} (or \eqref{approx equation2}), by solving a minimization problem at each time step, and finally by passing to the limit as the time step goes to zero. In Section \ref{S:CP}, we next develop a \emph{variational} comparison principle for (possibly non-unique) solutions to $(P)_{\mu}$ (see Proposition \ref{comp princ} below) as well as a comparison principle for sub- and supersolutions to some ODE problem associated with $(P)_{\mu}$ (see Lemma \ref{comp. princ. cost in space sol.} below). It allows us to compare one of solutions to $(P)_{\mu}$ with some constant-in-space supersolutions and subsolutions (see also Lemma \ref{a solution} below). In Section \ref{S:conv}, combining all these facts, we derive some uniform (in $\mu$) estimates for solutions to $(P)_{\mu}$ which enable us to pass to the limit as $\mu\rightarrow0$ and get a local-in-time solution to $(P)$, provided that initial data are uniformly away from $0$, i.e., $u^{0}\geq \delta >0$. The last section of the paper is devoted to proposing a weaker notion of solutions to (P) and to discussing existence of weak solutions also for \emph{non-negative} initial data.

It is noteworthy that the variational comparison principle for (possibly) non-unique solutions is not standard and quite useful in our setting. Indeed, in order to apply classical comparison principles (to derive uniform estimates for approximate solutions), we need, at least, uniqueness of solutions for $(P)_{\mu}$; however,  uniqueness  is not clear due to the severe nonlinearity of the problem. On the other hand, the uniqueness of approximate solutions (for each level $\mu$) is not essential to construct a solution to (P). The variational comparison principle is still applicable to such a setting without paying any extra effort to prove uniqueness.  Difficulties arising from non-uniqueness have already appeared  in~\cite{Ak}, where a (classical) comparison principle is proved for super- and \emph{strictly increasing} subsolutions and some strictly increasing subsolution is constructed by using the linearity of $\gamma(u)$ and further regularity assumption on initial data (see (ii) of Remark \ref{R:Assu} below). The variational comparison principle is one of major and important discoveries of the present paper and plays a crucial role of our analysis.

\bigskip
\noindent
{\bf Notation.} The \emph{positive-part} and \emph{negative-part functions} are given by $(s)_+ := \max\{s,0\}$ and $(s)_- := \max \{-s,0\}$, respectively, for $s \in \R$. Let $X$ be a normed space. We denote by $\|\cdot\|_X$ the norm of $X$ and by $\langle\cdot,\cdot\rangle_X$ the duality pairing between $X$ and its dual space $X^*$. If $X$ is a Hilbert space, we denote by $(\cdot,\cdot)_X$ an inner product in $X$. We denote by $C_w([0,T];X)$ the set of weakly continuous functions on $[0,T]$ with values in $X$. Let $u = u(x,t): \Omega \times [0,\infty) \to \mathbb R$ be a function with space and time variables. Let us also recall the  notion of  \emph{subdifferential operator} $\partial \varphi: X \to X^*$ of a proper (i.e., $\varphi \not\equiv +\infty$) lower semicontinuous convex functional $\varphi : X \to [0,+\infty]$ defined by
\begin{equation}\label{subdif}
 \partial \varphi(u) := \{ \xi \in X^* \colon \varphi(v)-\varphi(u) \geq \langle \xi, v - u \rangle_X  \ \mbox{ for all } v \in X\} 
\end{equation}
 with the domain $D(\partial \varphi) := \{ u \in X \colon \varphi(u) < +\infty, \ \partial \varphi(u) \neq \emptyset\}$. In case $X$ is a Hilbert space, the duality pairing in \eqref{subdif} may be replaced with the inner product $(\cdot,\cdot)_X$. Throughout the paper, for each $t \geq 0$ fixed, we simply denote by $u(t)$ the function $u(\cdot,t) : \Omega \to \mathbb R$ with only the space variable. 
Moreover, we denote by $I_{[0,+\infty)}$ the indicator function supported on the half-line $[0,+\infty)$. Let $I: L^2(\Omega) \to [0,+\infty]$ be the indicator function supported on the closed convex set $K := \{u \in L^2(\Omega) \colon u \geq 0 \ \mbox{ a.e.~in } \Omega\}$, that is,
\begin{equation}\label{A}
I(u) = \begin{cases}
	  0 &\mbox{ if } \ u \in K,\\
	  \infty &\mbox{ otherwise }
	 \end{cases}
	 \quad \mbox{ for } \ u \in L^2(\Omega).
\end{equation}
	 Moreover, let $\partial I_{[0,+\infty)}$ also denote the subdifferential operator (precisely, $\partial_{\mathbb R} I_{[0,+\infty)}$) in $\mathbb R$ (see~\eqref{subid}) as well as that (precisely, $\partial_{L^2(\Omega)} I$) in $L^2(\Omega)$ defined by \eqref{subdif}, that is,
	 $$
	 \partial_{L^2(\Omega)} I(u) = \left\{ \eta \in L^2(\Omega) \colon (\eta, u - v)_{L^2(\Omega)} \geq 0 \ \mbox{ for all } \ v \in K \right\} \quad \mbox{ for } \ u \in K.
	 $$
Here, we note that these two notions of subdifferentials are equivalent each other in the following sense: for $u, \eta \in L^2(\Omega)$,
	 $$
	 \eta \in \partial_{L^2(\Omega)} I_{[0,+\infty)}(u) \quad \mbox{ if and only if } \quad \eta(x) \in \partial_{\mathbb R} I_{[0,+\infty)}(u(x)) \ \mbox{ a.e.~in } \Omega
	 $$
	 (see, e.g.,~\cite{HB1,HB3}). 
We denote by $C$ a non-negative constant, which does not depend on the elements of the corresponding space or set and may vary from line to line. In order to emphasize some dependence of such a constant on some variable, e.g., $\sigma$, we may write $C_\sigma$, which may also vary from place to place.

\section{Assumptions and main result}\label{S:result}

Let us start by enlisting our assumptions.

\begin{description}
 \item[(A1)] There exist positive constants $C_1$, $C_2$ and an exponent $p$ satisfying
	    $$
	    1 < p < \dfrac{2d}{(d-2)_+}
	    $$
	    and a monotone continuous function $\gamma:[0,+\infty)\rightarrow\lbrack0,+\infty)$ satisfying
	    \begin{equation*}
	    \gamma(s)=0 \ \mbox{ if and only if } \ s=0
	    \end{equation*}
	    such that
\begin{alignat}{3}
C_{1}\left(  |s|^{p}-1\right) & \leq\hat{\gamma}(s) \ &\mbox{ for all } \ s \geq 0,\label{gamma1}\\
|\gamma(s)|^{p^{\prime}}  &  \leq C_2(|s|^{p}+1)\ &\mbox{ for all } \ s \geq 0,\label{gamma4}%
\end{alignat}
where $\hat{\gamma}(s):=\int_{0}^{s}\gamma(r)\,\mathrm{d}r$ and $p' := p/(p-1)$.

 \item[(A2)] It holds that $u^{0}\in H^{2}(\Omega)\cap L^{\infty}(\Omega)$ and $\partial_\nu u^0 = 0$ a.e.~in $\partial \Omega$. Moreover, there
exists $\delta>0$ such that $u^{0}\geq\delta$ a.e.~in $\Omega$.

 \item[(A3)] It holds that $\beta\in C^{1}((0,+\infty))$ and $\beta^{\prime}$ is non-increasing and positive in $(0,+\infty)$. In particular, for any $\vep > 0$, there exists a constant $C_{\vep}$ ($=\beta'(\vep)$) such that
	    \begin{equation}
	     0<\beta^{\prime}(s)\leq C_{\vep} \quad\text{ for all } \ s\geq\vep.
\label{beta growth}%
	    \end{equation}
	    
  
\end{description}

\begin{remark}[Assumptions]\label{R:Assu}
 {\rm
 \begin{enumerate}
  \item[(i)] Assumption {\bf (A1)} also implies
\begin{alignat}{3}
 \gamma(s)s & \geq C_1\left(  |s|^{p}-1\right) &&\mbox{ for all } s \geq 0,\label{gamma2}\\
 \hat{\gamma}(s)&\leq C_3(|s|^{p}+1) \quad &&\mbox{ for all } s \geq 0\label{gamma1.5}
\end{alignat}
	     for some $C_3 \geq 0$. Indeed, we observe that $\gamma(s)s \geq \hat \gamma(s)$ by convexity of $\hat \gamma$.
  \item[(ii)] {\bf (A2)} is weaker than assumptions on initial data in~\cite{Ak}. Indeed, it is no longer necessary to suppose that $(\Delta u^0+\gamma(u^0))_-$ belongs to $L^\infty(\Omega)$, thanks to variational comparison principle developed in \S \ref{S:CP} and an improved argument to derive a uniform estimate for approximate solutions.
  \item[(iii)] Assumption $u^{0}\geq \delta$ in {\bf (A2)} will be used to derive by {\bf (A3)} the boundedness of 
$\beta^{\prime}(u)$, and hence, of $\partial_{t}\beta(u)$ (in some suitable norms). 
In Section \ref{sec deg data}, we propose a weaker notion of solution  involving neither $\beta^{\prime}(u)$ nor $\partial_{t}\beta(u)$. 
This allows us to prove existence of weak solutions to $(P)$ without assuming 
$u^{0}\geq \delta$.
  \item[(iv)] Let $\vep > 0$ be fixed. From assumption {\bf (A3)}, it follows that
	       \begin{equation}
		|\beta(s)|\leq C(\vep)(s+1) \quad \mbox{ for all } \ s\geq\vep
		 \label{bound b}%
	       \end{equation}
	      for some constant $C(\vep)$ depending on $\vep$. In particular, for $q \in [1,+\infty)$, the mapping $u \mapsto \beta(u(\cdot))$ is strongly continuous in $L^q(\Omega)$, provided that $u \geq \vep$ for a.e.~in $\Omega$.
 \end{enumerate}
 }
\end{remark}

 \begin{remark}[Examples of $\gamma$ and $\beta$]\label{R:ex}
  {\rm
The above assumptions are satisfied by, e.g., 
\[
  \gamma(s)=|s|^{p-2}s \ \mbox{ for } \ p>1 \ \mbox{ and } \ 
  \beta(s) =\begin{cases}
	  \frac{s^{1-\alpha}}{1-\alpha} & \text{if } \ \alpha \geq 0, \ \alpha \neq 1\text{,}\\
	  \log s & \text{if } \ \alpha=1,
	    \end{cases}
\]
where the choice of $\beta$ above also stems from a damage accumulation model studied in~\cite{BaPr93,BeBi97,Ak}. 
In particular, it allows us to apply the main result stated below to \eqref{pde} under
  $$
  1 \leq m < +\infty \quad \mbox{ and } \quad 1 < \frac q m < 2^*.
  $$
  Furthermore, the case $\alpha = 1$ corresponds to the equation,
  $$
  \partial_t w = \left( \LAM \Delta e^w + e^{pw}\right)_+ \ \mbox{ in } \Omega \times (0,+\infty), \quad 1 < p < 2^*,
  $$
  and the case $\alpha > 1$ includes the so-called \emph{Penrose-Fife} equation (i.e., $\alpha = 2$).
}
 \end{remark}

Before stating our main result, let us give a definition of \emph{strong solution} to
the initial-boundary value problems $(P)$ and $(P)_{\mu}$ in a precise way.
\begin{definition}[Strong solution] Let $T>0$ and $\mu\geq0$. A positive function $u : \Omega \times (0,T) \to (0,+\infty)$ is called a \emph{strong solution} of $(P)_{\mu}$ {\rm (}in particular, $(P)$ if $\mu=0${\rm )} on $[0,T]$, if the following {\rm (i)--(iii)} are all satisfied\/{\rm :}
\begin{enumerate}
 \item[\rm (i)] It holds that
	      \begin{gather*}
	       u \in W^{1,2}(0,T;L^{2}(\Omega))\cap C_{w}([0,T];H^{1}(\Omega)), \\
	      \beta(u) \in W^{1,2}(0,T;L^{2}(\Omega)), \quad \Delta u \in L^{2}(0,T;L^{2}(\Omega)).
	      \end{gather*}
 \item[\rm (ii)] It is fulfilled that $u(\cdot,t)\in H^{2}(\Omega)$ for a.e.~$t\in (0,T)$ and $\partial_{\nu}u(t) =0$ a.e.~on $\partial\Omega$. Moreover, $\partial_{t}u(x,t) \geq0$ for a.e.~$(x,t)\in\Omega\times (0,T)$.
 \item[\rm (iii)] There exists $\xi\in L^{\infty}(0,T;L^2(\Omega))$ such that the following relations hold true\/{\rm :}
\begin{alignat*}{3}
\xi & \in\partial I_{[0,+\infty)}(\partial_{t}u) \ &\text{ a.e.~in }\Omega
\times(0,T),&\\
0  &  =\mu\gamma(\partial_{t}u)+\partial_{t}\beta(u)+\xi-\LAM\Delta
u-\gamma(u) \ &\text{ a.e.~in }\Omega\times(0,T) &\quad \text{ if } \ p\geq2\text{,}\\
0  &  =\mu\partial_{t}u+\partial_{t}\beta(u)+\xi-\LAM\Delta
u-\gamma(u) \ &\text{ a.e.~in }\Omega\times(0,T) &\quad \text{ if } \ 1<p<2.
\end{alignat*}
\end{enumerate}
Furthermore, $u$ is called a strong solution on $[0,T)$, if so is $u$ on $[0,S]$ for any $0 < S < T$.
\end{definition}

Our main result reads as follows.

\begin{theorem}[Existence of local solutions to $(P)$]\label{main thm}
 Suppose {\bf (A1)-(A3)} are all satisfied. Then there exist $T_0>0$ and a strong solution $u$ of $(P)$ defined on $[0,T_0]$. In addition, if $(\Delta u^0 + \gamma(u^0))_- \in L^q(\Omega)$ for $q \in (2,+\infty)$ {\rm (}$q = \infty$, respectively{\rm )}, then $\xi \in L^\infty(0,T;L^q(\Omega))$ {\rm (}$\xi \in L^\infty(\Omega \times (0,T_0))$, respectively{\rm )}.

 Set
 $$
 T_{\max} := \sup \left\{ S > 0 \colon (P) \mbox{ admits a strong solution on } [0,S] \right\} \geq T_0 > 0.
 $$
 Then
 $$
 T_{\max} \geq \hat T(M) := \int^{+\infty}_{\beta(M)} \dfrac{\d b}{\gamma \circ \beta^{-1}(b)} > 0 \quad \mbox{ with } \ M := \|u_0\|_{L^\infty(\Omega)}.
 $$
In particular, if $T_{\max}$ is finite, then
	       $$
	       T_{\max}\leq\hat{T}(\delta)
	       = \int^{+\infty}_{\beta(\delta)} \dfrac{\d b}{\gamma \circ \beta^{-1} (b)}.
	       $$
\end{theorem}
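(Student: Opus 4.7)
The plan is to build the strong solution as a limit of the approximate solutions to $(P)_\mu$ constructed in Section \ref{approx prob existence}, using the variational comparison principle from Section \ref{S:CP} to produce uniform $L^\infty$ bounds on an interval $[0,T_0]$ independent of $\mu$. The key observation is that the integral $\hat T(M)$ is precisely the blow-up time of the spatially constant ODE
\begin{equation*}
\dot W(t) = \gamma\!\circ\!\beta^{-1}(W(t)), \qquad W(0) = \beta(M),
\end{equation*}
obtained by dropping $\LAM\Delta u$ from \eqref{target eq} (the remaining inclusion with $\xi\in\partial I_{[0,+\infty)}(\partial_t u)$ is automatically satisfied because $\gamma\geq 0$, so $\partial_t W \geq 0$ and $\xi=0$). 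Consequently $\bar u(t):=\beta^{-1}(W(t))$ is a constant-in-space supersolution of $(P)_\mu$ on any subinterval $[0,T_0]\subset[0,\hat T(M))$.

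First I would fix $T_0\in(0,\hat T(M))$, invoke Lemma \ref{a solution} (or the variational comparison Proposition \ref{comp princ}) to deduce $u_\mu(x,t)\leq\bar u(t)\leq\beta^{-1}(W(T_0))=:K$ a.e.\ in $\Omega\times(0,T_0)$, uniformly in $\mu$. Because $u^0\geq\delta$ and the approximate solutions are nondecreasing in time, the lower bound $u_\mu\geq\delta$ is free, so by \textbf{(A3)} the quantity $\beta'(u_\mu)$ is bounded between $\beta'(K)$ and $\beta'(\delta)$ on $[0,T_0]$. Testing the approximate equation by $\partial_t u_\mu$ and by $-\Delta u_\mu$ and exploiting \eqref{gamma1}--\eqref{gamma1.5} together with the uniform $L^\infty$ bound then yields uniform-in-$\mu$ estimates
\begin{equation*}
\|\partial_t u_\mu\|_{L^2(0,T_0;L^2)} + \|\beta(u_\mu)\|_{W^{1,2}(0,T_0;L^2)} + \|u_\mu\|_{L^\infty(0,T_0;H^1)} + \|\Delta u_\mu\|_{L^2(0,T_0;L^2)} \leq C,
\end{equation*}
and, comparing $\xi_\mu$ with the other terms of the equation, a bound on $\xi_\mu$ in $L^2(0,T_0;L^2(\Omega))$ (this is where the improved $L^q$/$L^\infty$ estimate claimed for $(\Delta u^0+\gamma(u^0))_-\in L^q$ is obtained, by testing the equation against sign-type truncations of $\xi_\mu$).

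Next I would pass to the limit $\mu\to 0$: up to subsequences, $u_\mu\to u$ strongly in $C([0,T_0];L^2(\Omega))$ (Aubin--Lions), weakly-$\star$ in $L^\infty(0,T_0;H^1)$, with $\partial_t u_\mu\rightharpoonup \partial_t u$ and $\Delta u_\mu\rightharpoonup\Delta u$ weakly in $L^2$, and $\xi_\mu\rightharpoonup\xi$. The bound $\delta\leq u_\mu\leq K$ and Remark \ref{R:Assu}(iv) give $\beta(u_\mu)\to\beta(u)$ strongly in $C([0,T_0];L^2)$, so $\partial_t\beta(u_\mu)\rightharpoonup\partial_t\beta(u)$. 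The growth assumption \eqref{gamma4} lets us pass to the limit in $\gamma(u_\mu)$. For the viscosity term, the $L^\infty$ bound forces $\mu\gamma(\partial_t u_\mu)$ (or $\mu\partial_t u_\mu$) to vanish in the appropriate dual norm. Finally, maximal monotonicity of $\partial I_{[0,+\infty)}$ (together with the strong $L^2$ convergence of $u_\mu$ and the limsup inequality extracted from testing by $\partial_t u_\mu$) yields $\xi\in\partial I_{[0,+\infty)}(\partial_t u)$ in the limit. This is the step I expect to be most delicate, since the subdifferential inclusion is enforced pointwise and requires a careful identification of the limit of $(\xi_\mu,\partial_t u_\mu)$.

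The two remaining assertions are then routine consequences. For $T_{\max}\geq \hat T(M)$, one observes that the above construction yields a strong solution on every $[0,T_0]\subset[0,\hat T(M))$, so by the very definition of $T_{\max}$ one has $T_{\max}\geq T_0$ for every such $T_0$, hence $T_{\max}\geq \hat T(M)$. For the upper bound when $T_{\max}<\infty$, I would use the variational comparison from the other side: the ODE solution $V(t)$ of $\dot V=\gamma\!\circ\!\beta^{-1}(V)$ with $V(0)=\beta(\delta)$ is a constant-in-space subsolution (again $\xi=0$ is admissible), and the comparison lemma gives $u(x,t)\geq \beta^{-1}(V(t))$ pointwise. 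Since $V(t)\to +\infty$ as $t\uparrow \hat T(\delta)$, no strong solution can be extended past $\hat T(\delta)$, whence $T_{\max}\leq \hat T(\delta)$. The main obstacle throughout is the simultaneous nonlinearity and unboundedness of all three operators $\partial_t\beta(\cdot)$, $\partial I_{[0,+\infty)}(\partial_t\cdot)$ and $-\LAM\Delta-\gamma$; the variational comparison principle is precisely what tames the $L^\infty$ loss of control that would otherwise prevent passage to the limit in $\gamma(u_\mu)$ and in $\beta'(u_\mu)\partial_t u_\mu$.
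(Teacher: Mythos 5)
Your proposal is correct and follows essentially the same route as the paper: uniform $L^\infty$ bounds on $[0,T_0]\subset[0,\hat T(M))$ via constant-in-space ODE barriers combined with the variational comparison principle (this is exactly Proposition \ref{l infty est prop}), energy estimates from testing with $\partial_t u_\mu$, the $L^q$ bound on $\xi$ via truncation testing (done in the paper at the time-discrete level in Lemma \ref{xi regularity}), a Minty-type limit passage as $\mu\to 0$, and the life-span bounds from the barriers $z_M$ and $z_\delta$. The one compressed point is that the comparison with the $\mu$-independent ODE solution cannot be read off from Proposition \ref{comp princ} alone, which compares A-solutions with A-solutions rather than with supersolutions: one first dominates $u_\mu$ by the constant-in-space A-solution $z_M^\mu$ of the $\mu$-regularized ODE (Lemma \ref{a solution}) and only then invokes the ODE comparison Lemma \ref{comp. princ. cost in space sol.} against $z_{M+\vep}$, letting $\vep\to 0$ — precisely the chain carried out in Proposition \ref{l infty est prop}.
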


\begin{remark}[Global existence]
 {\rm
Thanks to Theorem \ref{main thm}, if there exists $C>0$ such that
 \begin{equation}
  \gamma\circ\beta^{-1}(s) \leq C(s+1) \quad \text{ for all } \ s > 0\text{,}
   \label{sublinear growth}%
 \end{equation}
 then $T_{\max} = +\infty$.
 }
\end{remark}

Theorem \ref{main thm} will be proved step by step in the next three sections. 

\section{Global solutions for approximate problems\label{approx prob existence}}

We only treat the case $p\geq2$ and prove existence of solutions to (\ref{approx equation}), (\ref{xi})-(\ref{IC}). The other case $1 <p<2$ is simpler; indeed, $\gamma(u)$ is sublinear, and therefore, as in~\cite{Ak}, it is enough to add a linear regularization to construct global (approximate) solutions (hence, the corresponding approximate equation is of the form \eqref{approx equation2}). We can also handle \eqref{approx equation2} by repeating the same argument with minor modifications (see Remark \ref{p small} and \eqref{h1 est} below). The main result of this section is stated as follows:

 \begin{proposition}[Existence of global solutions to $(P)_{\mu}$]\label{P:aprx}
  Let assumptions \emph{\textbf{(A1)-(A3)}} be satisfied. Then, for every $T>0$, there exists a
  strong solution $u_\mu$ to $(P)_{\mu}$ on $[0,T]$. In particular, if $p \geq 2$, then $u_\mu$ further fulfills
  $$
  u_\mu \in W^{1,p}(0,T;L^p(\Omega)), \quad \Delta u_\mu \in L^{p'}(0,T;L^{p'}(\Omega)), \quad p':= p/(p-1).
  $$
\end{proposition}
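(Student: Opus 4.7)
The plan follows the strategy outlined in the paragraph preceding the proposition: discretize $(P)_\mu$ in time by backward Euler, construct the discrete solutions via a convex variational problem at each step, derive a priori bounds uniform in the time step $\tau$, and pass to the limit $\tau\to 0$ by compactness and monotone-operator arguments. I focus on $p\ge 2$; the sublinear case $1<p<2$ is analogous with the linear regularization $\mu\partial_t u$ in place of $\mu\gamma(\partial_t u)$.

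\emph{Step 1: discrete scheme via minimization.} Fix $N\in\mathbb N$, set $\tau=T/N$ and $u_\tau^0=u^0$. Given $u_\tau^{n-1}$, I define $u_\tau^n$ as the minimizer over $K_n:=\{v\in W^{1,p}(\Omega):v\ge u_\tau^{n-1}\ \text{a.e.~in }\Omega\}$ of
$$
J_n(v):=\int_\Omega\!\left[\frac{\hat\beta(v)-\beta(u_\tau^{n-1})\,v}{\tau}+\tfrac12|\nabla v|^2+\tau\mu\,\hat\gamma\!\Big(\tfrac{v-u_\tau^{n-1}}{\tau}\Big)-\gamma(u_\tau^{n-1})\,v\right]\d x,
$$
where $\hat\beta$ is any primitive of $\beta$. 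The semi-implicit placement of $\gamma(u)$ (evaluated at the previous step) is essential to make $J_n$ convex, since both $\hat\beta$ and $\hat\gamma$ are convex by the monotonicity encoded in (A1), (A3) and the barrier set $K_n$ is convex. Coercivity in $W^{1,p}$ follows from the $p$-growth of $\hat\gamma$ in \eqref{gamma1} together with the quadratic gradient term and Poincar\'e, so the direct method of the calculus of variations yields a minimizer. Reading the optimality relation $0\in\partial J_n(u_\tau^n)+N_{K_n}(u_\tau^n)$ and identifying the normal-cone component with $\xi_\tau^n\in\partial I_{[0,+\infty)}((u_\tau^n-u_\tau^{n-1})/\tau)$ recovers the discrete form of \eqref{approx equation}, together with homogeneous Neumann boundary data and the built-in non-decreasing chain $u_\tau^n\ge u_\tau^{n-1}\ge\cdots\ge\delta>0$.

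\emph{Step 2: uniform a priori estimates.} Testing the discrete equation by $(u_\tau^n-u_\tau^{n-1})/\tau$ and summing in $n$, using (i) monotonicity of $\beta$ (nonnegativity of the $\partial_t\beta$ term), (ii) $\xi_\tau^n\cdot(u_\tau^n-u_\tau^{n-1})/\tau=0$ from \eqref{subid}, (iii) the lower bound \eqref{gamma2} on $\gamma(s)s$, (iv) the discrete chain rule on $\tfrac12\|\nabla u_\tau^n\|_{L^2}^2$, and (v) Young's inequality to absorb $\gamma(u_\tau^{n-1})$ via \eqref{gamma4}, produces uniform bounds for $u_\tau$ in $L^\infty(0,T;H^1(\Omega))\cap W^{1,p}(0,T;L^p(\Omega))$ and for $\hat\gamma(u_\tau)$ in $L^\infty(0,T;L^1(\Omega))$. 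Because $u_\tau^n\ge\delta$ and $\beta'(s)\le\beta'(\delta)$ for $s\ge\delta$ by (A3), $\partial_t\beta(u_\tau)=\beta'(u_\tau)\partial_t u_\tau$ inherits an $L^2(0,T;L^2)$ bound through the embedding $L^p\hookrightarrow L^2$ for $p\ge 2$. An $L^\infty$ bound on $u_\tau$ is furnished by comparison with a spatially constant supersolution (in the spirit of the variational comparison principle developed in \S \ref{S:CP}), which in turn controls $\gamma(u_\tau)$ in $L^\infty$. Solving the discrete equation for $\Delta u_\tau^n$ and applying Neumann elliptic regularity then yields $\Delta u_\tau\in L^{p'}(0,T;L^{p'})\cap L^2(0,T;L^2)$, $u_\tau(t)\in H^2(\Omega)$ for a.e.~$t$, and $\xi_\tau\in L^\infty(0,T;L^2(\Omega))$.

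\emph{Step 3: passage to the limit $\tau\to 0$.} Introducing the piecewise-linear and piecewise-constant interpolants, the above bounds together with the Aubin-Lions lemma extract a subsequence with $u_\tau\to u_\mu$ strongly in $C([0,T];L^2(\Omega))$ and weakly in the spaces above, while $\Delta u_\tau\rightharpoonup\Delta u_\mu$ and $\partial_t\beta(u_\tau)\rightharpoonup\partial_t\beta(u_\mu)$ in $L^2$ (the latter by the Lipschitz continuity of $\beta$ on $[\delta,+\infty)$ together with the strong convergence of $u_\tau$). The main obstacle is the identification of the weak limits of $\xi_\tau$ and $\gamma(\partial_t u_\tau)$, since the only available information on $\partial_t u_\tau$ is weak convergence. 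This is handled by the standard maximal-monotone argument: using the discrete energy identity from Step 2, the chain rule $\int_0^T(\Delta u_\mu,\partial_t u_\mu)_{L^2}\,\d t=-\tfrac12(\|\nabla u_\mu(T)\|_{L^2}^2-\|\nabla u^0\|_{L^2}^2)$, and the weak lower semicontinuity of $v\mapsto\int_\Omega\hat\beta(v)\,\d x$, one passes to the $\limsup$ to obtain
$$
\limsup_{\tau\to 0}\int_0^T\!\!\int_\Omega\!\big(\xi_\tau+\mu\gamma(\partial_t u_\tau)\big)\partial_t u_\tau\,\d x\,\d t\le\int_0^T\!\!\int_\Omega\!\big(\xi+\mu\gamma(\partial_t u_\mu)\big)\partial_t u_\mu\,\d x\,\d t,
$$
and Minty's lemma, applied separately to the maximal-monotone graphs $\partial I_{[0,+\infty)}$ and $\gamma$, identifies both weak limits with the required values, yielding a strong solution $u_\mu$ of $(P)_\mu$ with the additional regularity claimed for $p\ge 2$.
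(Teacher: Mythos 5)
Your overall architecture (backward Euler, a variational problem per step, a priori bounds, Aubin--Lions plus Minty) matches the paper's, but you make one genuinely different choice and you leave one genuine gap. The different choice: you evaluate $\gamma(u)$ at the previous step so that each $J_n$ is convex, whereas the paper keeps $\gamma(u_{n+1})$ implicit, accepts a \emph{non-convex} functional, and proves existence of a minimizer by showing that for $\tau$ small the penalty $\mu\tau\int_\Omega\hat\gamma((u-u_n)/\tau)$ dominates $\int_\Omega\hat\gamma(u)$ (coercivity) and that the concave part is continuous along minimizing sequences via the compact embedding $H^1(\Omega)\hookrightarrow L^p(\Omega)$, $p<2^*$. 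Your semi-implicit variant is legitimate and simplifies the existence of minimizers; the price is that every subsequent estimate that exploits the exact discrete structure must be rechecked (it mostly survives, since the extra term $\gamma(u_\tau^{n-1})-\gamma(u_\tau^{n})$ has a sign). A minor slip: your functional controls only $\|\nabla v\|_{L^2}$ and $\|v\|_{L^p}$, so the natural ambient space is $H^1(\Omega)$, not $W^{1,p}(\Omega)$ for $p>2$.

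The genuine gap is the bound $\xi\in L^\infty(0,T;L^2(\Omega))$, which is part of the definition of strong solution and hence of the statement you are proving. You assert it follows ``by comparison'' in the discrete equation after elliptic regularity, but comparison only yields $\xi_\tau$ in an integrated-in-time norm such as $L^{p'}(0,T;L^{p'}(\Omega))$, because $\partial_t\beta(u_\tau)$, $\mu\gamma(\partial_t u_\tau)$ and $\Delta u_\tau$ are controlled only in $L^{p}$ or $L^{p'}$ in time, not uniformly in $t$. The paper needs a separate \emph{second energy estimate}: it constructs an artificial datum $u_{-1}$ so that the scheme is satisfied at $n=-1$, subtracts consecutive discrete equations, tests with a truncation $G_R(\xi_{n+1})$ of $|\xi_{n+1}|^{q-2}\xi_{n+1}$, and uses the sign information coming from $\mathrm{supp}\,\xi_{n+1}\subset\{u_{n+1}=u_n\}$ together with $\int_\Omega(-\Delta w)\eta\ge 0$ for $\eta\in\partial I_{[0,+\infty)}(w)$ to conclude the monotone decay $\|\xi_{n+1}\|_{L^q}\le\|\xi_n\|_{L^q}\le\|(\Delta u^0+\gamma(u^0))_-\|_{L^q}$. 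Without this step (or a substitute) you neither obtain the regularity class required for $\xi$ nor the refined $L^q$/$L^\infty$ statements that are used later in Theorem \ref{main thm}. Separately, your appeal to the comparison principle of \S\ref{S:CP} to get an $L^\infty$ bound on $u_\tau$ is not needed for this proposition and introduces an awkward forward dependence: the paper closes the first energy estimate with the discrete Gronwall lemma and $\mu$-dependent constants, using only the $L^p$ and $H^1$ bounds, and reserves the comparison argument for the $\mu$-uniform estimates of \S\ref{S:conv}.
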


We first introduce a time discretization of $(P)_{\mu}$ and rewrite the discretized equation
at each time step as a (non-convex) minimization problem. Secondly, we establish
uniform estimates for the discretized solutions and then pass to the limit as the time step goes to $0$.

\subsection{Discretization.}

Let $N\in\mathbb{N} $ and $\tau=T/N$ and consider the discretized problem
$(P)_{\mu,n}$: Find $\{u_{n}\}\in\left(  H^{1}(\Omega)\right)^{N}$ such that%
\begin{align}
\mu\gamma\left(  \frac{u_{n+1}-u_{n}}{\tau}\right)  +\frac{\beta
(u_{n+1})-\beta(u_{n})}{\tau}+\xi_{n+1}  &  =\LAM\Delta u_{n+1}%
 +\gamma(u_{n+1})
 \text{,}%
\label{discrete euler}\\
\xi_{n+1}  &  \in\partial I_{[0,+\infty)}\left(  \frac{u_{n+1}-u_{n}}{\tau
 }\right)
 \text{,}\label{discrete euler2}\\
u_{0}  &  =u^{0}\text{,} \label{discrete euler 3}%
\end{align}
for every $n\in\{0,\dots,N-1\}$. To solve it, define a functional $J_{n+1}:H^{1}%
(\Omega)\rightarrow(-\infty,+\infty]$ by
\begin{align}
J_{n+1}(u)  &  =\mu\int_{\Omega}\tau\hat{\gamma}\left(  \frac{u-u_{n}}{\tau
}\right)  +\frac{1}{\tau}\int_{\Omega}\hat{\beta}(u)+\tau I\left(
\frac{u-u_{n}}{\tau}\right) \nonumber\\
&  \quad +\frac{\LAMuno}{2}\left\Vert \nabla u\right\Vert _{L^{2}(\Omega
)}^{2}-\int_{\Omega}\hat{\gamma}(u)-\left(  \frac{\beta(u_{n})}{\tau
},u\right)  _{L^{2}(\Omega)} \label{J functional}%
\end{align}
where $\hat{\beta}(s)=\int_{0}^{s}\beta(r)\mathrm{d}r$, $\hat{\gamma}%
(s)=\int_{0}^{s}\gamma(r)\mathrm{d}r$ and $I(u)$ is defined by \eqref{A}.
Let us prove existence of a\ minimizer of $J_{n+1}$. In order to
do this, we first show that for $\tau>0$ sufficiently small
\begin{align}
\lefteqn{\mu\int_{\Omega}\tau\hat{\gamma}\left(  \frac{u-u_{n}}{\tau}\right)
 -\int_{\Omega}\hat{\gamma}(u)
 }\nonumber\\
 &\geq-C_{p,\tau,\mu,u_{n}}+\frac{\mu}{2}%
\int_{\Omega}\tau\hat{\gamma}\left(  \frac{u-u_{n}}{\tau}\right)  \text{ for
all }u\in H^{1}(\Omega)\text{,} \label{computation1}%
\end{align}
where $C_{p,\tau,\mu,u_{n}}$ is a positive constant depending on $p,\tau,\mu,$
and $u_{n}$. Indeed, thanks to assumption \eqref{gamma1.5},
\begin{align*}
\frac{1}{2}\tau\hat{\gamma}\left(  \frac{u-u_{n}}{\tau}\right)   &  \geq
C\tau\left\vert \frac{u-u_{n}}{\tau}\right\vert ^{p}-C\geq\tau C\left\vert
\frac{u}{\tau}\right\vert ^{p}-C_{p,\tau,u_{n}}\\
&  \geq C\tau^{1-p}\hat{\gamma}(u)-C_{p,\tau,u_{n}}\text{.}%
\end{align*}
Here and henceforth the symbol $C$ will denote a positive constant independent of
$\mu$, $n$, $\tau$ and possibly varying from line to line. If $\tau$ is small
enough, namely $C\tau^{1-p}\geq\frac{1}{\mu}$, we get
\[
\frac{\mu}{2}\int_{\Omega}\tau\hat{\gamma}\left(  \frac{u-u_{n}}{\tau}\right)
-\int_{\Omega}\hat{\gamma}(u)\geq-C_{p,\tau,\mu,u_{n}},%
\]
which implies inequality (\ref{computation1}). This proves the functional
$J_{n+1}(\cdot)$ to be bounded from below and coercive in $H^1(\Omega)$ for every $u_{n}$ fixed. Moreover,
we decompose
\[
J_{n+1}(u)=\frac{1}{\tau}\int_{\Omega}\hat{\beta}(u)+\tau I\left(
\frac{u-u_{n}}{\tau}\right)  +\frac{\LAMuno}{2}\left\Vert \nabla
u\right\Vert _{L^{2}(\Omega)}^{2}-\left(  \frac{\beta(u_{n})}{\tau},u\right)
_{L^{2}(\Omega)}+G_{n+1}(u)\text{,}%
\]
where
\[
G_{n+1}(u)=\mu\int_{\Omega}\tau\hat{\gamma}\left(  \frac{u-u_{n}%
}{\tau}\right)  -\int_{\Omega}\hat{\gamma}(u)\text{.}%
\]
Note that $J_{n+1}(\cdot)-G_{n+1}(\cdot)$ is convex and lower semicontinuous
in $H^{1}(\Omega)$. Let $\{u_{k}\}$ be a minimizing sequence for $J_{n+1}$.
Then, $\{u_{k}\}$ is bounded in $H^{1}(\Omega)$ from the coercivity of $J_{n+1}$. By virtue of the Sobolev
embedding results, there exists a (not relabeled) subsequence $u_{k}%
\rightarrow u$ in $L^{p}(\Omega)$ by $p<2^{\ast}$. Thanks to continuity of
$\gamma$ and assumption (\ref{gamma1.5}), we have
\[
\lim_{k\rightarrow\infty}G_{n+1}(u_{k})=G_{n+1}(u)\text{.}%
\]
Thus, by lower semicontinuity,
\begin{align*}
\inf_{w\in H^1(\Omega)} J_{n+1}(w)  &  =\liminf_{k\rightarrow\infty}J_{n+1}(u_{k})\geq
\liminf_{k\rightarrow\infty}\left(  J_{n+1}(u_{k})-G_{n+1}(u_{k})\right)
+\lim_{k\rightarrow\infty}G_{n+1}(u_{k})\\
&  \geq J_{n+1}(u)-G_{n+1}(u)+G_{n+1}(u)=J_{n+1}(u)\text{.}%
\end{align*}
Thus $J_{n+1}$ admits at least one minimizer
$u_{n+1}\in H^{1}(\Omega)$. Moreover, $u_{n+1}$ solves the Euler-Lagrange
equation (\ref{discrete euler})-(\ref{discrete euler2}) in $\left(
H^{1}(\Omega)\right)  ^{\ast}$, namely
\begin{align}
\mu\gamma\left(  \frac{u_{n+1}-u_{n}}{\tau}\right)  +\frac{\beta
(u_{n+1})-\beta(u_{n})}{\tau}+\xi_{n+1}  &  =\LAM\Delta u_{n+1}%
+\gamma(u_{n+1})%
\text{,}\label{eq h1 star}\\
\xi_{n+1}  &  \in\partial_{H^{1}(\Omega)}I\left(  \frac{u_{n+1}-u_{n}}{\tau
}\right),\nonumber
\end{align}
where $\partial_{H^1(\Omega)}$ stands for the subdifferential in $H^1(\Omega)$. Here we also used the fact that $\int_\Omega \hat \gamma(u)$, $\int_\Omega \hat \beta(u)$ are of class $C^1$ in $H^1(\Omega)$. Indeed, a sum rule for subdifferential is nontrivial, but it holds at least for the sum of smooth (e.g.~$C^1$) functionals and a non-smooth (e.g.~convex, l.s.c.) functional. Invoking a regularity theory for variational inequalities of
obstacle type (see, e.g., \cite{AkKi13}), as in \cite[Theorem 2.1]{Ak}, we can prove that
$$
u_{n+1}\in D(-\Delta) := \{w \in L^{p'}(\Omega) \colon \Delta w \in L^{p'}(\Omega) \mbox{ and } \partial_\nu w = 0 \mbox{ a.e.~in } \partial \Omega \}
$$
for $n = 0,1,\ldots,N-1$. Here we also used the fact that $u_0 = u^0 \in D(-\Delta)$ (see {\bf (A2)}) to apply the regularity theory. Hence, by comparison in (\ref{eq h1 star}), $\xi_{n+1}\in L^{p^{\prime}}(\Omega)$. In particular,%
\[
\xi_{n+1}\in\partial I_{[0,+\infty)}\left(  \frac{u_{n+1}-u_{n}}{\tau}\right)
\text{ a.e.~in }\Omega
\]
and $u_{n+1}$ solves (\ref{discrete euler})-(\ref{discrete euler 3}) in $L^{p'}(\Omega)$.
Furthermore, as $J_{n+1}(u_{n+1})$ is finite, we have $I_{[0,+\infty)}\left(
\frac{u_{n+1}-u_{n}}{\tau}\right)  <+\infty$ a.e.~in $\Omega$. Thus,
$u_{n+1}-u_{n}$ must be non-negative. Recalling $u^{0}\geq\delta$ we have,
$u_{n+1}\geq u_{n}\geq\delta$ for every $n\in\{0,\dots,N-1\}$.

\subsection{Uniform estimates.}

We now derive some a-priori estimates for $u_{n}$. Testing equation
(\ref{discrete euler}) by $\frac{u_{n+1}-u_{n}}{\tau}$, we get
\begin{gather}
\mu\int_{\Omega}\gamma\left(  \frac{u_{n+1}-u_{n}}{\tau}\right)  \frac
{u_{n+1}-u_{n}}{\tau}+\left(  \frac{\beta(u_{n+1})-\beta(u_{n})}{\tau}%
,\frac{u_{n+1}-u_{n}}{\tau}\right)  _{L^{2}(\Omega)}\nonumber\\
+\left(  \xi_{n+1},\frac{u_{n+1}-u_{n}}{\tau}\right)  _{L^{p}(\Omega
)}+\frac{\LAMuno}{\tau}\left\Vert \nabla u_{n+1}\right\Vert _{L^{2}(\Omega
)}^{2}-\frac{\LAMuno}{\tau}\int_{\Omega}\nabla u_{n+1}\cdot\nabla
 u_{n}\nonumber\\
 =\int_{\Omega}\gamma(u_{n+1})\frac{u_{n+1}-u_{n}}{\tau}\text{.}
\label{discrete initial estimate}%
\end{gather}
Note that $\left(  \xi_{n+1},\frac{u_{n+1}-u_{n}}{\tau}\right)  _{L^{p}%
(\Omega)}=0$. Indeed, either $\xi_{n+1}(x)=0$ or $u_{n+1}(x)=u_{n}\left(
x\right)$ holds for a.e.~$x\in\Omega$. By monotonicity of $\beta$,
\[
\left(  \frac{\beta(u_{n+1})-\beta(u_{n})}{\tau},\frac{u_{n+1}-u_{n}}{\tau
}\right)  _{L^{2}(\Omega)}\geq0.
\]
By assumption (\ref{gamma2}), we have%
\[
\mu\int_{\Omega}\gamma\left(  \frac{u_{n+1}-u_{n}}{\tau}\right)  \frac
{u_{n+1}-u_{n}}{\tau}\geq\mu C_1\left\Vert \frac{u_{n+1}-u_{n}}{\tau}\right\Vert
_{L^{p}(\Omega)}^{p}-\mu C_1\text{.}%
\]
By using (\ref{gamma4}) and the Young inequality, for any $\alpha > 0$, one can take constants $C_\alpha > 0$ (which may vary from line to line below) such that
\begin{align*}
\int_{\Omega}\gamma(u_{n+1})\frac{u_{n+1}-u_{n}}{\tau}  &  \leq\alpha
\left\Vert \frac{u_{n+1}-u_{n}}{\tau}\right\Vert _{L^{p}(\Omega)}
^{p}+C_{\alpha}\left\| \gamma(u_{n})\right\|_{L^{p'}(\Omega)}^{p'}\\
&  \leq\alpha\left\Vert \frac{u_{n+1}-u_{n}}{\tau}\right\Vert _{L^{p}(\Omega
)}^{p}+C_{\alpha}\left\Vert u_{n}\right\Vert _{L^{p}(\Omega)}^{p}+C_{\alpha}.
\end{align*}
Moreover, we estimate
\[
-\int_{\Omega}\nabla u_{n+1} \cdot \nabla u_{n}\geq-\frac{1}{2}\left\Vert \nabla
u_{n+1}\right\Vert _{L^{2}(\Omega)}^{2}-\frac{1}{2}\left\Vert \nabla
u_{n}\right\Vert _{L^{2}(\Omega)}^{2}.%
\]
Substituting all these facts into (\ref{discrete initial estimate}), we get%
\begin{align*}
&  \mu C\left\Vert \frac{u_{n+1}-u_{n}}{\tau}\right\Vert _{L^{p}(\Omega)}%
^{p}+\frac{\LAMuno}{2\tau}\left\Vert \nabla u_{n+1}\right\Vert
_{L^{2}(\Omega)}^{2}-\frac{\LAMuno}{2\tau}\left\Vert \nabla u_{n}%
\right\Vert _{L^{2}(\Omega)}^{2}\\
&  \leq\alpha\left\Vert \frac{u_{n+1}-u_{n}}{\tau}\right\Vert _{L^{p}(\Omega
)}^{p}+C_{\alpha}\left\Vert u_{n}\right\Vert _{L^{p}(\Omega)}^{p}+C_{\alpha
} + \mu C_1 \text{,}%
\end{align*}
which yields, for $\alpha > 0$ small enough,%
\begin{align}
&  \frac{\mu}{2}C_1\left\Vert \frac{u_{n+1}-u_{n}}{\tau}\right\Vert
_{L^{p}(\Omega)}^{p}+\frac{\LAMuno}{2\tau}\left\Vert \nabla u_{n+1}%
\right\Vert _{L^{2}(\Omega)}^{2}-\frac{\LAMuno}{2\tau}\left\Vert \nabla
u_{n}\right\Vert _{L^{2}(\Omega)}^{2}\nonumber\\
&  \leq C_{\mu}\left\Vert u_{n}\right\Vert _{L^{p}(\Omega)}^{p}+C_{\mu
}\text{.} \label{disc intermediate estimate}%
\end{align}
Note that
\[
\frac{u_{n+1}^{p}-u_{n}^{p}}{\tau}=p\tilde{u}_{n}^{p-1}\frac{u_{n+1}-u_{n}%
}{\tau}%
\]
for some $\tilde{u}_{n}\in\lbrack u_{n},u_{n+1}].$ Hence, by using the Young
inequality, for $\vep > 0$, there is some $C_\vep > 0$ such that
\[
\int_{\Omega}\frac{u_{n+1}^{p}-u_{n}^{p}}{\tau}\leq \vep \int_{\Omega}\left(
\frac{u_{n+1}-u_{n}}{\tau}\right)  ^{p}+ C_\vep \int_{\Omega}\tilde{u}_{n}^{p}.
\]
Substituting it into (\ref{disc intermediate estimate}) and recalling
$u_{n+1}\geq\tilde{u}_{n}$, we obtain
\[
\mu\int_{\Omega}\frac{u_{n+1}^{p}-u_{n}^{p}}{\tau}+
\frac{\LAMuno}{2\tau}\left\Vert \nabla u_{n+1}\right\Vert _{L^{2}(\Omega)}^{2}%
-\frac{\LAMuno}{2\tau}\left\Vert \nabla u_{n}\right\Vert _{L^{2}(\Omega)}^{2}
\leq C_{\mu}\left\Vert u_{n+1}\right\Vert _{L^{p}(\Omega)}%
^{p}+C_{\mu}\text{.}%
\]
Multiplying both sides by $\tau$ and taking the sum over $\{0,...,n\}$, we
get,
\begin{align}
 \mu\left\Vert u_{n+1}\right\Vert _{L^{p}(\Omega)}^{p}
 +
 \frac{\LAMuno}%
{2}\left\Vert \nabla u_{n+1}\right\Vert _{L^{2}(\Omega)}^{2}  &  \leq
 \frac{\LAMuno}{2}\left\Vert \nabla u^{0}\right\Vert _{L^{2}(\Omega)}%
^{2}+\mu\left\Vert u^{0}\right\Vert _{L^{p}(\Omega)}^{p}\nonumber\\
& \quad +C_{\mu}n\tau+\tau C_{\mu}\sum_{k=0}^{n}\left\Vert u_{k+1}\right\Vert
_{L^{p}(\Omega)}^{p}\nonumber\\
 &  \leq 
 \frac{\LAMuno}{2}\left\Vert \nabla u^{0}\right\Vert
_{L^{2}(\Omega)}^{2}+\mu\left\Vert u^{0}\right\Vert _{L^{p}(\Omega)}%
^{p} +C_{\mu}T \nonumber\\
&\quad  +\tau C_{\mu}\sum_{k=1}^{n}\left\Vert u_{k}\right\Vert
_{L^{p}(\Omega)}^{p}+\tau C_{\mu}\left\Vert u_{n+1}\right\Vert _{L^{p}%
(\Omega)}^{p}\text{.} \label{formula}%
\end{align}
In particular, for $\tau<\mu/2C_{\mu}$,
\begin{align*}
\left(  \mu-\tau C_{\mu}\right)  \left\Vert u_{n+1}\right\Vert _{L^{p}%
 (\Omega)}^{p}  &  \leq 
 \frac{\LAMuno}{2}\left\Vert \nabla u^{0}%
\right\Vert _{L^{2}(\Omega)}^{2}+\mu\left\Vert u^{0}\right\Vert _{L^{p}%
(\Omega)}^{p}\\
& \quad +C_{\mu}T+\tau C_{\mu}\sum_{k=1}^{n}\left\Vert u_{k}\right\Vert
_{L^{p}(\Omega)}^{p}\text{.}%
\end{align*}
Multiplying by $\left(  \mu-\tau C_{\mu}\right)  ^{-1}$, we have
\[
\left\Vert u_{n+1}\right\Vert _{L^{p}(\Omega)}^{p}\leq C_{\mu}\LAM\left\Vert \nabla u^{0}\right\Vert _{L^{2}(\Omega)}^{2}+2\left\Vert
u^{0}\right\Vert _{L^{p}(\Omega)}^{p}+2C_{\mu}T+2\tau C_{\mu}\sum_{k=1}%
^{n}\left\Vert u_{k}\right\Vert _{L^{p}(\Omega)}^{p}\text{.}%
\]
By applying the discrete Gronwall lemma, one has
\[
\max_{n}\left\Vert u_{n}\right\Vert _{L^{p}(\Omega)}^{p}\leq C_{\mu}\left(
\left\Vert u^{0}\right\Vert _{L^{p}(\Omega)}^{p}+\left\Vert \nabla
u^{0}\right\Vert _{L^{2}(\Omega)}^{2}+ T\right)
\]
and hence, substituting the above into (\ref{formula}), we also infer that
\begin{equation}
 \max_{n} 
	    \left\Vert\nabla u_{n}\right\Vert _{L^{2}(\Omega)}^{2}
 \leq C_{\mu}\left(
\left\Vert u^{0}\right\Vert _{L^{p}(\Omega)}^{p}+\left\Vert \nabla
u^{0}\right\Vert _{L^{2}(\Omega)}^{2}+ T \right)  \text{.}
\label{discrete unif est1}%
\end{equation}
By (\ref{disc intermediate estimate}), we have
\begin{equation}
\sum_{n=0}^{N-1}\tau\left\Vert \frac{u_{n+1}-u_{n}}{\tau}\right\Vert
_{L^{p}(\Omega)}^{p}\leq C_{\mu}\text{.} \label{lp integral est}%
\end{equation}
In case $1 < p < 2$, \eqref{lp integral est} with $p = 2$ directly follows. We claim that
\begin{equation}
\max_{n}\left\Vert u_{n}\right\Vert _{H^{1}(\Omega)}\leq C_{\mu}\text{.}
\label{h1 est}%
\end{equation}
Indeed, this is trivially true if $p\geq2$. If $p<2$, as a consequence of the
Gagliardo-Nirenberg interpolation inequality, we have
\[
\left\Vert u_{n}\right\Vert _{L^{2}(\Omega)}\leq C\left\Vert \nabla
u_{n}\right\Vert _{L^{2}(\Omega)}^{a}\left\Vert u_{n}\right\Vert
_{L^{p}(\Omega)}^{1-a}%
\]
for some $a=a(p,d)\in(0,1)$, and thus, (\ref{h1 est}) follows.

Thanks to estimates (\ref{discrete unif est1}), (\ref{lp integral est}) and
assumption (\ref{gamma4}) (see also Remark \ref{R:Assu}), we have
\begin{align}
\max_{n}\left\Vert \gamma(u_{n})\right\Vert _{L^{p\prime}(\Omega)}^{p^{\prime
}}  &  \leq C_{\mu}\text{,}\label{gamma est}\\
\sum_{n=0}^{N-1}\tau\left\Vert \gamma\left(  \frac{u_{n+1}-u_{n}}{\tau
}\right)  \right\Vert _{L^{p^{\prime}}(\Omega)}^{p^{\prime}}  &  \leq C_{\mu
}\text{.} \label{gamma est2}%
\end{align}
Thanks to \eqref{bound b} and estimate (\ref{discrete unif est1}), we have
\begin{align*}
\max_{n}\left\Vert \beta(u_{n})\right\Vert _{L^{p}(\Omega)}^{p}  &  \leq
C_{\mu}\text{,}\\
\max_{n}\left\Vert \beta(u_{n})\right\Vert _{L^{2}(\Omega)}^{2}  &  \leq
C_{\mu}.
\end{align*}
By assumption \eqref{beta growth} and the Mean-Value Theorem again, we estimate
\begin{align*}
\left\vert \frac{\beta(u_{n+1})-\beta(u_{n})}{\tau}\right\vert  &  =\frac
{1}{\tau}|\beta^{\prime}((1-\theta_{n})u_{n+1}+\theta_{n}u_{n})|\cdot
|u_{n+1}-u_{n}|\\
&  \leq C_{\delta}\left\vert \frac{u_{n+1}-u_{n}}{\tau}\right\vert
\end{align*}
for some $\theta_{n}(x)\in(0,1)$, for a.e.~$x\in\Omega$. Thus, as a
consequence of (\ref{lp integral est}),
\[
\sum_{n=0}^{N-1}\tau\left\Vert \frac{\beta(u_{n+1})-\beta(u_{n})}{\tau
}\right\Vert _{L^{p}(\Omega)}^{p}\leq C_{\mu}\text{, }%
\]
which with $p \geq 2$ gives
\begin{equation}
\sum_{n=0}^{N-1}\tau\left\Vert \frac{\beta(u_{n+1})-\beta(u_{n})}{\tau
}\right\Vert _{L^{2}(\Omega)}^{2}\leq C_{\mu}\text{.} \label{beta est}%
\end{equation}
In case $1 < p < 2$, \eqref{beta est} follows from \eqref{lp integral est} with $p = 2$.

We further derive uniform estimates for $\xi_{n+1}$ and $\Delta u_{n+1}$.

  \begin{lemma}[Estimates for $\xi_n$]\label{xi regularity}
   It is satisfied that
   \begin{equation}\label{e:xi:2}
       \|\xi_{n+1}\|_{L^2(\Omega)} \leq \|(\LAM\Delta u^0+\gamma(u^0))_-\|_{L^2(\Omega)}.
   \end{equation}
   Moreover, there exists a constant $C_{\mu}\geq0$ depending on $\mu$ and $u^{0}$ such that%
\begin{equation}
\sum_{n=0}^{N-1}\tau\left\Vert \Delta u_{n+1}\right\Vert _{L^{p^{\prime}%
}(\Omega)}^{p^{\prime}}\leq C_{\mu}. \label{H2 bound}%
\end{equation}
   In addition, assume that $(\LAM\Delta u^{0}+\gamma(u^{0}))_{-}\in L^{q}(\Omega)$ for some $q\in (2+\infty]$. Then it holds that
\[
\left\Vert \xi_{n+1}\right\Vert _{L^{q}(\Omega)}\leq\left\Vert (\LAM\Delta u^{0}+\gamma(u^{0}))_{-}\right\Vert _{L^{q}(\Omega)}.
\]
 \end{lemma}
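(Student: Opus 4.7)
The plan is to first establish the $L^q$ bounds on $\xi_{n+1}$ (which contain \eqref{e:xi:2} as the case $q=2$ and the $L^q/L^\infty$ refinement), and then derive \eqref{H2 bound} as a quick consequence by isolating $\Delta u_{n+1}$ in \eqref{discrete euler}. The core observation is that \eqref{discrete euler2} forces $\xi_{n+1}\leq 0$ in $\Omega$ and $\xi_{n+1}=0$ wherever $u_{n+1}>u_n$, so the non-negative test function $w:=(-\xi_{n+1})^{q-1}$ is supported, up to a null set, on $A:=\{u_{n+1}=u_n\}$; consequently $\gamma((u_{n+1}-u_n)/\tau)$, $\beta(u_{n+1})-\beta(u_n)$, and $\gamma(u_{n+1})-\gamma(u_n)$ all vanish on the support of $w$.

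The crucial technical input is the identity $\Delta(u_{n+1}-u_n)=0$ a.e.\ on $A$, which follows from a twofold application of Stampacchia's truncation lemma to the non-negative function $v:=u_{n+1}-u_n\in H^2(\Omega)$ (this $H^2$-regularity being available from the elliptic regularity of obstacle-type problems invoked just above the lemma, combined with \textbf{(A2)}): first $\nabla v=0$ a.e.\ on $\{v=0\}$ since $v\geq 0$, and then, applied to each $\partial_i v\in H^1(\Omega)$, one gets $\partial_j\partial_i v=0$ a.e.\ on $\{\partial_i v=0\}\supset A$. Hence $\Delta v=0$ a.e.\ on $A$, so the product $\Delta(u_{n+1}-u_n)\,w$ vanishes pointwise a.e.\ and no integration by parts against the low-regularity test function $w$ is needed.

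With this at hand, the base case $n=0$ is handled by subtracting $\Delta u^0+\gamma(u^0)$ from both sides of \eqref{discrete euler}, testing by $(-\xi_1)^{q-1}$, and integrating: all non-Laplacian difference terms vanish on the support of the test function, the Laplacian term vanishes by the fact above, and one is left with $\|\xi_1\|_{L^q(\Omega)}^q=-\int_\Omega(\Delta u^0+\gamma(u^0))(-\xi_1)^{q-1}\,dx\leq\int_\Omega(\Delta u^0+\gamma(u^0))_-(-\xi_1)^{q-1}\,dx$, which by H\"older yields $\|\xi_1\|_{L^q}\leq\|(\Delta u^0+\gamma(u^0))_-\|_{L^q}$. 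For the inductive step $n\geq 1$, I subtract \eqref{discrete euler} at step $n$ from that at step $n+1$, test by $(-\xi_{n+1})^{q-1}$, and integrate; the two "old-step" residual terms $\mu\gamma((u_n-u_{n-1})/\tau)$ and $(\beta(u_n)-\beta(u_{n-1}))/\tau$ are non-negative (thanks to $u_n\geq u_{n-1}$, $\gamma\geq 0$ on $[0,\infty)$, and monotonicity of $\beta$) and hence carry a favorable sign, producing $\|\xi_{n+1}\|_{L^q}^q\leq\int_\Omega(-\xi_n)(-\xi_{n+1})^{q-1}\,dx\leq\|\xi_n\|_{L^q}\|\xi_{n+1}\|_{L^q}^{q-1}$. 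Iterating gives the bound for every $q\in[2,\infty)$, and $q=\infty$ follows by letting $q\to\infty$ and using $|\Omega|^{1/q}\to 1$.

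Finally, \eqref{H2 bound} follows by isolating $\Delta u_{n+1}$ in \eqref{discrete euler}, taking $L^{p'}$ norms, raising to the $p'$-th power, and applying $\tau\sum_n$: the four resulting sums are controlled by \eqref{gamma est2}, \eqref{beta est} (via $p'\leq 2$ when $p\geq 2$ together with a discrete H\"older in time), the $L^2$ bound on $\xi_{n+1}$ just obtained, and \eqref{gamma est}, respectively. The main obstacle throughout is precisely the vanishing of the Laplacian integral: once the $H^2$-regularity of $u_n$ and $u_{n+1}$ is granted, the double Stampacchia argument bypasses the need for a distributional integration by parts against the low-regularity test function $(-\xi_{n+1})^{q-1}$; the remainder is standard bookkeeping via H\"older and Young inequalities.
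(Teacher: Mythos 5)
Your argument follows the same overall strategy as the paper's proof: subtract consecutive discrete equations, test with a $(q-1)$-power of $-\xi_{n+1}$, exploit that $\xi_{n+1}$ vanishes off the contact set $\{u_{n+1}=u_n\}$ where all the increment terms either vanish or carry a favorable sign, conclude $\|\xi_{n+1}\|_{L^q}\leq\|\xi_n\|_{L^q}$, and obtain \eqref{H2 bound} by comparison in \eqref{discrete euler}. Two of your deviations are legitimate alternatives. For the Laplacian term, the paper does not prove the pointwise identity $\Delta(u_{n+1}-u_n)=0$ a.e.\ on the contact set; it only needs the one-sided inequality $\int_\Omega -\Delta(u_{n+1}-u_n)\,\eta_{n+1}\geq 0$, which it gets from \cite[Prop.~A.2]{Ak} applied to $u_{n+1}-u_n\geq 0$ in $D(-\Delta)$ and $\eta_{n+1}\in\partial I_{[0,+\infty)}((u_{n+1}-u_n)/\tau)$. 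Your double Stampacchia argument is a valid and in fact stronger substitute, with the caveat that the available regularity is $\Delta u_{n+1}\in L^{p'}(\Omega)$, hence $u_{n+1}\in W^{2,p'}(\Omega)$ with $p'\leq 2$, not $H^2(\Omega)$ as you assert; this is harmless, since the truncation lemma only requires $W^{1,1}_{\mathrm{loc}}$ regularity at each of the two stages. For the base case, your direct subtraction of $\Delta u^0+\gamma(u^0)$ replaces the paper's construction of a fictitious datum $u_{-1}$, chosen so that the discrete equation also holds at $n=-1$ with $\xi_0=-(\Delta u^0+\gamma(u^0))_-$; both devices accomplish the same thing and yours is arguably more economical.

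There is, however, one genuine gap: you test with $(-\xi_{n+1})^{q-1}$ itself, but a priori $\xi_{n+1}$ is only known to belong to $L^{p'}(\Omega)$ with $p'\leq 2$ (it is obtained by comparison in \eqref{eq h1 star}), so the products $\xi_{n+1}(-\xi_{n+1})^{q-1}$ and $\xi_n(-\xi_{n+1})^{q-1}$ are not known to be integrable before the very estimate you are proving; the step $\|\xi_{n+1}\|_{L^q}^q\leq\|\xi_n\|_{L^q}\|\xi_{n+1}\|_{L^q}^{q-1}$ also presupposes $\|\xi_{n+1}\|_{L^q}<\infty$ in order to divide through. This is circular as written, already for $q=2$. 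The paper circumvents it by testing with the bounded monotone truncation $\eta_{n+1}=G_R(\xi_{n+1})$, deducing $\int_\Omega\hat G_R(\xi_{n+1})\leq\int_\Omega\hat G_R(\xi_n)$, and letting $R\to+\infty$ via monotone convergence; you should insert the same (or an equivalent) truncation. Once this is repaired, your derivation of \eqref{H2 bound} by comparison, using \eqref{gamma est}, \eqref{gamma est2}, \eqref{beta est} and the $L^2$ bound on $\xi_{n+1}$, is exactly the paper's.
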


\begin{proof}
In this proof, we shall establish a second energy estimate by differentiating (in time) the discretized equation. To this end, we start with generating an additional data of $u_n$ (for $n = -1$). Set
$$
\alpha(x,z) := \mu \gamma\left( \dfrac{u_0(x)-z}\tau\right) + \dfrac{\beta(u_0(x))-\beta(z)}\tau
$$
for $z \in \R$ and $x \in \Omega$. Then $\alpha(x,z)$ is continuous and strictly decreasing in $z$ for a.e.~$x \in \Omega$ and it holds by assumptions that
$$
\lim_{z \to \pm \infty} \alpha(x,z) = \mp \infty,
$$
that is, the range of $\alpha(x,\cdot)$ coincides with $\R$. Hence there exists a measurable function $z(x)$ such that
$$
\alpha(x,z(x)) = \Delta u_0(x) + \gamma(u_0(x)) \quad \mbox{ for a.e. } x \in \Omega.
$$
Then one can check that
$$
z(x) < u_0(x) \quad \mbox{ if } \ \Delta u_0(x) + \gamma(u_0(x)) > 0.
$$
 Now, set
$$
u_{-1}(x) := \begin{cases}
	      u_0(x) &\mbox{if } \ \Delta u_0(x)+\gamma(u_0(x)) \leq 0,\\
	      z(x) &\mbox{if } \ \Delta u_0(x)+\gamma(u_0(x)) > 0.
	     \end{cases}
	     $$
	     Then it follows that $u_0 \geq u_{-1}$, and moreover,
\begin{align}
 \mu \gamma \left( \dfrac{u_0 - u_{-1}}\tau \right) + \dfrac{\beta(u_0)-\beta(u_{-1})}\tau + \xi_0
 &= \Delta u_0 + \gamma(u_0),\label{s1}\\
 \xi_0 &\in \partial I_{[0,+\infty)}\left( \dfrac{u_0-u_{-1}}\tau \right),\label{s2}
\end{align}
 which corresponds to \eqref{discrete euler} and \eqref{discrete euler2} with $n = -1$.

Let $R>0$ and define $\eta_{n+1}=G_{R}(\xi_{n+1})\in L^{\infty}(\Omega)$,
where $G_{R}\in C^{1}(\mathbb{R} )$ is a monotone function satisfying%
\[
G_{R}(u)=\begin{cases}
	  |u|^{q-2}u & \text{if } \ |u|\leq R\text{,}\\
	  \mathrm{sign}(u)(R+1)^{q-1} & \text{if } \ |u|\geq R+2 \text{.}%
	 \end{cases}
\]
By subtraction of equations and test by $\eta_{n+1}$, we get
\begin{align}
&  \mu\left(  \gamma\left(  \frac{u_{n+1}-u_{n}}{\tau}\right)  -\gamma\left(
\frac{u_{n}-u_{n-1}}{\tau}\right)  ,\eta_{n+1}\right)  _{L^{p}(\Omega
)}\nonumber\\
&\quad  +\left(  \frac{\beta\left(  u_{n+1}\right)  -\beta\left(  u_{n}\right)
}{\tau}-\frac{\beta(u_{n})-\beta(u_{n-1})}{\tau},\eta_{n+1}\right)
_{L^{2}(\Omega)}\nonumber\\
&\quad  +\left(  \xi_{n+1}-\xi_{n},\eta_{n+1}\right)  _{L^{p}\left(  \Omega\right)
}+(-\LAM\Delta(u_{n+1}-u_{n}),\eta_{n+1})_{L^{p}(\Omega)}\nonumber\\
&  =\left(  \gamma(u_{n+1})-\gamma(u_{n}),\eta_{n+1}\right)  _{L^{p}\left(
\Omega\right)  } \label{t eq}%
\end{align}
 for $n = 0,1,\cdots,N-1$. Note that $(-\LAM\Delta(u_{n+1}-u_{n}),\eta_{n+1})_{L^{p}(\Omega)} \geq 0$ by integration by parts.
 Here we also used the fact that $u_0 = u^0 \in D(\Delta)$. Indeed, as a consequence of \cite[Prop. A.2]{Ak},
we have%
\[
\int_{\Omega}-\Delta u\eta\geq0 \quad \text{ for all } \ \eta\in\partial I_{[0,+\infty
)}(u)\text{ and }u\in D(-\Delta)\text{ satisfying } u\geq0\text{. }%
\]
Moreover, recalling the non-decrease $u_{n+1}\geq u_{n}$ a.e.~in $\Omega$, the positivity of $\gamma$ by (A1), and the strict monotonicity of $\beta$ by (A3), we observe that
\begin{align}
\partial I_{[0,+\infty)}\left(  \frac{\beta(u_{n+1})-\beta(u_{n})}{\tau
}\right)   &  =\partial I_{[0,+\infty)}\left(  \frac{u_{n+1}-u_{n}}{\tau
}\right) \nonumber\\
&  =\partial I_{[0,+\infty)}\left(  \gamma\left(  \frac{u_{n+1}-u_{n}}{\tau
}\right)  \right)  \text{ a.e.~in }\Omega\text{.}\label{dI}%
\end{align}
Then, by using the definition of subdifferential, as $\eta_{n+1}%
\in\partial I_{[0,+\infty)}\left(  \gamma\left(  \frac{u_{n+1}-u_{n}}{\tau
}\right)  \right)$, one has
\begin{align*}
&  \int_{\Omega}\left(  \gamma\left(  \frac{u_{n+1}-u_{n}}{\tau}\right)
-\gamma\left(  \frac{u_{n}-u_{n-1}}{\tau}\right)  \right)  \eta_{n+1}\\
&  \geq I\left(  \gamma\left(  \frac{u_{n+1}-u_{n}}{\tau
}\right)  \right) -I\left(  \gamma\left(  \frac{u_{n}-u_{n-1}%
}{\tau}\right)  \right)  =0
\end{align*}
and similarly by \eqref{dI},%
\[
\int_{\Omega}\left(  \frac{\beta(u_{n+1})-\beta(u_{n})}{\tau}-\frac
{\beta\left(  u_{n}\right)  -\beta(u_{n-1})}{\tau}\right)  \eta_{n+1}%
\geq0\text{.}%
\]
Note that $\eta_{n+1}(x)\neq0$ only if $u_{n+1}(x)=u_{n}(x)$. It follows that
\[
\int_{\Omega}\left(  \gamma\left(  u_{n+1}\right)  -\gamma\left(
u_{n}\right)  \right)  \eta_{n+1}=0.
\]
Combining the above estimates and (\ref{t eq}), one gets
\[
\left(  \xi_{n+1}-\xi_{n},\eta_{n+1}\right)  _{L^{p}\left(  \Omega\right)
}\leq0
\]
 for $n = 0,1,\ldots,N-1$. By using the monotonicity of $G_{R}$ and the definition of subdifferential, we
deduce
\[
\int_{\Omega}\hat{G}_{R}(\xi_{n+1})\leq\int_{\Omega}\hat{G}_{R}(\xi_{n})
\]
for all $n=0,1,...,N-1$, where $\hat{G}_{R}$ is the primitive function of
$G_{R}$ such that $\hat{G}_{R}(0)=0$. Passing to the limit as $R\rightarrow
+\infty$, we obtain%
\begin{equation}
\int_{\Omega}|\xi_{n+1}|^{q}\leq\int_{\Omega}|\xi_{n}|^{q} \label{est}%
\end{equation}
for $n = 0,1,\ldots,N-1$.

We claim that
\begin{equation}
0\leq|\xi_{n}|=-\xi_{n} = (\LAM\Delta u_{n}+\gamma(u_{n}%
))_{-} \ \mbox{ a.e.~in } \Omega \ \mbox{ for } \ n = 0,1,\ldots,N. \label{estimate xi}%
\end{equation}
Indeed, we recall again that $\xi_{n}(x)\neq0$ only if $u_{n}(x)=u_{n-1}(x)$. By (\ref{discrete euler}), \eqref{discrete euler2}, \eqref{s1}, \eqref{s2} and $\gamma(0)=0$, we deduce that either $\xi_{n}(x)=0$ (then $\LAM\Delta u_{n}(x)+\gamma(u_{n}(x)) \geq 0$) or
\[
0> \xi_{n}(x)=\LAM\Delta u_{n}(x)+\gamma(u_{n}(x))
\]
holds for $n = 0,1,2,\ldots, N$. Thus, (\ref{estimate xi}) holds true.

By comparison in relation (\ref{est}), we have
\[
\left\Vert \xi_{n+1}\right\Vert _{L^{q}(\Omega)}\leq\left\Vert \xi
 _{n}\right\Vert _{L^{q}(\Omega)}\leq \cdots \leq\left\Vert \xi_{1}\right\Vert_{L^{q}(\Omega)}
 \leq\left\Vert \xi_{0}\right\Vert_{L^{q}(\Omega)}\leq\left\Vert (\LAM\Delta u^{0}+\gamma(u^{0}%
))_{-}\right\Vert _{L^{q}(\Omega)}.
\]
In the case $(\LAM\Delta u^{0}+\gamma(u^{0}))_{-}\in L^{\infty}%
(\Omega)$, we can pass to the limit as $q\rightarrow+\infty$ in both sides and
conclude
\begin{equation}
\left\Vert \xi_{n}\right\Vert _{L^{\infty}(\Omega)}\leq\left\Vert (\LAM\Delta u^{0}+\gamma(u^{0}))_{-}\right\Vert _{L^{\infty}(\Omega)}\text{.}
\label{xi est}%
\end{equation}
Finally, estimate (\ref{H2 bound}) follows by comparison in equation
(\ref{discrete euler}) and by using (\ref{gamma est}), (\ref{gamma est2}), and
(\ref{beta est}).
 \end{proof}

\subsection{Passage to the limit.}

We introduce the piecewise constant interpolants $\bar{u}_{\tau}, \bar{\xi}_{\tau}$ and piecewise affine interpolants $u_{\tau}, v_\tau$ defined by
\begin{align*}
\bar{u}_{\tau}(t) &:=u_{n+1}, \quad \bar{\xi}_{\tau}(t):=\xi_{n+1},\\
u_{\tau}(t)  &  :=\frac{t_{n+1}-t}{\tau}u_{n}+\frac{t-t_{n}}{\tau}
u_{n+1}\text{,}\\
v_{\tau}(t)  &  :=\frac{t_{n+1}-t}{\tau}\beta(u_{n})+\frac{t-t_{n}}{\tau}%
\beta(u_{n+1}) \quad \mbox{ for } \ t\in\lbrack t_{n},t_{n+1}),%
\end{align*}
for $n\in\{0,...,N-1\}$. Then, system
(\ref{discrete euler})-(\ref{discrete euler2}) can be rewritten as
\begin{align}
\mu\gamma(\partial_{t}u_{\tau})+\partial_{t}v_{\tau}+\bar{\xi}_{\tau}  &
=\LAM\Delta\bar{u}_{\tau}+\gamma(\bar{u}_{\tau})\text{,}%
\label{discrete euler 2}\\
 \bar{\xi}_{\tau}  &  \in\partial I_{[0,+\infty)}(\partial_{t}u_{\tau})\text{.}%
 \nonumber
\end{align}
Thanks to the a-priori estimates above, we can extract a (not
relabeled) subsequence such that the following convergences hold:%
\begin{alignat}{4}
\bar{u}_{\tau}  &  \rightarrow\bar{u} \quad &&\text{ weakly * in }L^{\infty}%
 (0,T;H^{1}(\Omega)),
 \nonumber\\
u_{\tau}  &  \rightarrow u \quad &&\text{ weakly * in }L^{\infty}(0,T;H^{1}(\Omega)),\nonumber\\
\partial_{t}u_{\tau}  &  \rightarrow\partial_{t}u \quad &&\text{ weakly in }%
L^{p}(0,T;L^{p}(\Omega)),\label{conv dt ubar lp}\\
v_{\tau}  &  \rightarrow v \quad &&\text{ weakly * in }L^{\infty}(0,T;L^{p}(\Omega)),\nonumber\\
\partial_{t}v_{\tau}  &  \rightarrow\partial_{t}v \quad && \text{ weakly in }%
L^{p}(0,T;L^{p}(\Omega)),\nonumber\\
\beta(\bar{u}_{\tau})  &  \rightarrow\bar{v} \quad &&\text{ weakly * in }L^{\infty
 }(0,T;L^{p}(\Omega)),\nonumber\\
 \gamma(\partial_{t}u_{\tau})  &  \rightarrow\gamma \quad &&\text{ weakly in
}L^{p^{\prime}}(0,T;L^{p^{\prime}}(\Omega)),\nonumber\\
\bar{\xi}_{\tau}  &  \rightarrow\xi \quad &&\text{ weakly * in } L^{\infty}(0,T; L^2(\Omega)),\label{xi conv}\\
\Delta\bar{u}_{\tau}  &  \rightarrow\Delta\bar{u} \quad &&\text{ weakly in
}L^{p^{\prime}}(0,T;L^{p^{\prime}}(\Omega)),\nonumber
\end{alignat}
for some limits
\begin{align*}
u  &  \in W^{1,p}(0,T;L^{p}(\Omega))\cap L^{\infty}(0,T;H^{1}(\Omega
))\text{,}\\
\bar{u}  &  \in L^{\infty}(0,T;H^{1}(\Omega))\cap L^{p^{\prime}}%
(0,T;W^{2,p^{\prime}}(\Omega))\text{,}\\
 v  &  \in W^{1,p}(0,T;L^{p}(\Omega)),
 \quad \bar{v} \in L^{\infty}(0,T;L^{p}(\Omega))\text{,} \\
 \xi &\in L^{\infty}(0,T; L^2(\Omega)), \quad
 \bar{\gamma} \in L^{p^{\prime}}(0,T;L^{p^{\prime}}(\Omega))\text{.}%
\end{align*}
Furthermore, from Ascoli's Compactness Lemma (see, e.g., \cite{Si}) along with
estimate (\ref{h1 est}) and \ the compact embedding $H^{1}(\Omega
)\hookrightarrow L^{r}\left(  \Omega\right)  $ for all $1\leq r<2^{\ast}$, it
follows that
\[
u_{\tau}\rightarrow u\text{ strongly in }C\left(  [0,T];L^{r}\left(
\Omega\right)  \right)  \text{ for all }r \in [1,2^{\ast})\text{.}%
\]
Observe that, thanks to estimate (\ref{lp integral est}), recalling that $p>1$, we
have
\[
||u_{\tau}(t)-\bar{u}_{\tau}(t)||_{L^{p}\left(  \Omega\right)  }^{p}=\left(
\frac{t_{n+1}-t}{\tau}\right)  ^{p}||u_{n+1}-u_{n}||_{L^{p}\left(
\Omega\right)  }^{p}\leq C_\mu \tau^{p-1}\rightarrow0,
\]
which yields $u=\bar{u}$ and
\[
\bar{u}_{\tau}\rightarrow u\text{ strongly in }L^{\infty}(0,T;L^{r}%
(\Omega))\text{ for all }r\in [1,2^{\ast}).
\]
Indeed, we can derive the convergence above for $r \in [1,2^*) \cap [1,p]$ and then remove the restriction on $[1,p]$ by \eqref{h1 est}.
One can similarly verify $v=\bar{v}$. In particular, as a consequence of the continuity of $\gamma$ and of
assumption (\ref{gamma4}), we get
\begin{equation}
\gamma(\bar{u}_{\tau})\rightarrow\gamma(u)\text{ strongly in } L^\infty(0,T;L^{p^{\prime}}(\Omega)). \label{gamma strong convergence}%
\end{equation}
Due to the demiclosedness of maximal monotone operators
(see, e.g., \cite[Prop. A.1]{Ak}), we identify $v=\beta(u)$.

Now we are ready to pass to the limit in equation
(\ref{discrete euler 2}) and obtain
\begin{align}
\mu\bar{\gamma}+\partial_{t}\beta(u)+\xi &  =\LAM\Delta u+\gamma\left(
u\right)  \text{,}\label{eq with gamma}\\
u(0)  &  =u^{0}\text{.}\nonumber%
\end{align}
Note also that $\partial_{t}u\geq0$ a.e.~in $\Omega\times(0,T)$, since
$\partial_{t}u_{\tau}$ is non-negative. We now identify the limit $\xi$ as a
section of $\partial I_{[0,+\infty)}(\partial_{t}u)$. By $p\geq2$, note that
$\partial_{t}u_{\tau}$ and $\partial_{t}v_{\tau}\ $are bounded in
$L^{2}(0,T;L^{2}(\Omega)).$ By comparison in equation (\ref{discrete euler 2}%
), we have
\begin{align*}
&  \limsup_{\tau\rightarrow0}\int_{0}^{T}(\bar{\xi}_{\tau},\partial_{t}%
u_{\tau})_{L^{2}(\Omega)}\\
&  =\limsup_{\tau\rightarrow0}\left\{  \int_{0}^{T}\left(  -\partial
_{t}v_{\tau},\partial_{t}u_{\tau}\right)  _{L^{2}\left(  \Omega\right)  }%
+\int_{0}^{T}\left(  \LAM\Delta\bar{u}_{\tau}-\mu\gamma(\partial
_{t}u_{\tau})+\gamma(\bar{u}_{\tau}),\partial_{t}u_{\tau}\right)
_{L^{p}\left(  \Omega\right)  }\right\} \\
&  \leq-\frac{\LAMuno}{2}\liminf_{\tau\rightarrow0}||\nabla u_{\tau
}(T)||_{L^{2}(\Omega)}^{2}+\frac{\LAMuno}{2}||\nabla u^{0}||_{L^{2}%
(\Omega)}^{2}-\liminf_{\tau\rightarrow0}\int_{0}^{T}\left(  \partial
_{t}v_{\tau},\partial_{t}u_{\tau}\right)  _{L^{2}\left(  \Omega\right)  }\\
&  \quad -\liminf_{\tau\rightarrow0}\int_{0}^{T} \left( \mu\gamma(\partial
 _{t}u_{\tau}),\partial_{t}u_{\tau}\right)_{L^{p}(\Omega)}
 +\lim_{\tau\rightarrow0}\int_{0}^{T}\left(  \gamma(\bar{u}_{\tau}%
),\partial_{t}u_{\tau}\right)  _{L^{p}\left(  \Omega\right)  }\text{.}%
\end{align*}
Thanks to lower semicontinuity of the norm and convergence $\bar{u}_{\tau
}(T)\rightarrow u(T)$ weakly in $H^{1}(\Omega)$, we have
\[
\frac{\LAMuno}{2}\liminf_{\tau\rightarrow0}||\nabla u_{\tau}%
(T)||_{L^{2}(\Omega)}^{2}\geq \frac{\LAMuno}{2}||\nabla u(T)||_{L^{2}%
(\Omega)}^{2}.
\]
Arguing as in \cite[Lemma 3.7]{Ak}, we can prove that%
\[
\liminf_{\tau\rightarrow0}\int_{0}^{T}\left(  \partial_{t}v_{\tau}%
,\partial_{t}u_{\tau}\right)  _{L^{2}\left(  \Omega\right)  }\geq \int_{0}%
^{T}\left(  \partial_{t}v,\partial_{t}u\right)  _{L^{2}\left(  \Omega\right)
}\text{.}%
\]
Note that, as a consequence\ of strong convergence
(\ref{gamma strong convergence}), we have%
\[
\lim_{\tau\rightarrow0}\int_{0}^{T}\left(  \gamma(\bar{u}_{\tau}),\partial
_{t}u_{\tau}\right)  _{L^{p}\left(  \Omega\right)  }=\int_{0}^{T}\left(
\gamma(u),\partial_{t}u\right)  _{L^{p}\left(  \Omega\right)  }\text{.}%
\]
Finally, as a consequence of the monotonicity of $\gamma$ and of convergence
(\ref{conv dt ubar lp}), we have
\[
\liminf_{\tau\rightarrow0}\int_{0}^{T}\left(  \mu\gamma(\partial_{t}u_{\tau
}),\partial_{t}u_{\tau}\right)  _{L^{p}\left(  \Omega\right)  }\geq\int
_{0}^{T}\left(  \mu\bar{\gamma},\partial_{t}u\right)  _{L^{p}\left(
\Omega\right)  }.
\]
Thus, using (\ref{eq with gamma}), we estimate%
\begin{align*}
\limsup_{\tau\rightarrow0}\int_{0}^{T}(\bar{\xi}_{\tau},\partial_{t}u_{\tau
})_{L^{2}(\Omega)}  &  \leq-\frac{\LAMuno}{2}||\nabla u(T)||_{L^{2}%
(\Omega)}^{2}+\frac{\LAMuno}{2}||\nabla u^{0}||_{L^{2}(\Omega)}^{2}%
-\int_{0}^{T}\left(  \partial_{t}v,\partial_{t}u\right)  _{L^{2}\left(
\Omega\right)  }\\
& \quad -\int_{0}^{T}\left(  \mu\bar{\gamma},\partial_{t}u\right)  _{L^{p}\left(
\Omega\right)  }+\int_{0}^{T}\left(  \gamma(u),\partial_{t}u\right)
_{L^{p}\left(  \Omega\right)  }\\
&  =\int_{0}^{T}(\xi,\partial_{t}u)_{L^{2}(\Omega)}\text{.}%
\end{align*}
By using the maximal monotonicity of $\partial_{L^{ 2}(\Omega)}I$, we
have $\xi\in\partial_{L^{ 2}\left(  \Omega\right)  }I(\partial_{t}u)$
and hence $\xi\in\partial I_{[0,+\infty)}(\partial_{t}u)$ a.e.~in
$\Omega\times (0,T)$. Moreover, we have (see, e.g., \cite[Prop. A.1]{Ak})%
\[
\lim_{\tau\rightarrow0}\int_{0}^{T}(\bar{\xi}_{\tau},\partial_{t}u_{\tau
})_{L^{2}(\Omega)}=\int_{0}^{T}(\xi,\partial_{t}u)_{L^{2}(\Omega)}\text{.}%
\]

We next identify $\bar{\gamma}=\gamma(\partial_{t}u)$. To this aim, we estimate
\begin{align*}
&  \limsup_{\tau\rightarrow0}\int_{0}^{T}\left(  \mu\gamma(\partial_{t}%
u_{\tau}),\partial_{t}u_{\tau}\right)  _{L^{p}\left(  \Omega\right)  }\\
&  \leq\limsup_{\tau\rightarrow0} \int_{0}^{T}\left(  -\partial
_{t}v_{\tau},\partial_{t}u_{\tau}\right)  _{L^{2}(\Omega)} \\
&  \quad +\limsup_{\tau\rightarrow0}\int_{0}^{T}\left(  \LAM\Delta\bar
{u}_{\tau},\partial_{t}u_{\tau}\right)  _{L^{p}(\Omega)}+\lim_{\tau
 \rightarrow0}\int_{0}^{T} \left[(-\bar{\xi}_{\tau},\partial_{t}u_{\tau})_{L^{2}(\Omega)}
+ (\gamma(\bar{u}_{\tau}),\partial_{t}u_{\tau})_{L^{p}(\Omega)} \right].
\end{align*}
Arguing as above, we get
\[
\limsup_{\tau\rightarrow0}\int_{0}^{T}\left(  \mu\gamma(\partial_{t}u_{\tau
}),\partial_{t}u_{\tau}\right)  _{L^{p}\left(  \Omega\right)  }\leq\int
_{0}^{T}\left(  \mu\bar{\gamma},\partial_{t}u\right)  _{L^{p}\left(
\Omega\right)  }.
\]
By the demiclosedness of maximal monotone operators, it follows that
$\bar{\gamma}=\gamma(\partial_{t}u)$.

\bigskip

\begin{remark}[Proof for the case $1 < p < 2$]
{\rm
 \label{p small} A similar conclusion to Proposition \ref{P:aprx} for $p \geq 2$ can be obtained also for $1 < p<2$. In this case
the regularized equation reads $\partial_{t}\beta\left(  u\right)  +\xi
+\mu\partial_{t}u=\LAM\Delta u+\gamma(u)$. By testing the corresponding
discrete equation by $\partial_{t}u_{\tau}$ and by simply estimating
$\int_{\Omega}\gamma(u)\partial_{t}u_{\tau}\leq\alpha\int_{\Omega}%
|\partial_{t}u_{\tau}|^{2}+C_{\alpha}\int_{\Omega}|\gamma(u)|^{2}\leq
\alpha\int_{\Omega}|\partial_{t}u_{\tau}|^{2}+C_{\alpha}\int_{\Omega}%
|u|^{2}+C_{\alpha}$ for every $\alpha>0$ and some $C_{\alpha}$ we can obtain
the a-priori estimates (\ref{discrete unif est1}),
(\ref{lp integral est}), (\ref{h1 est}), (\ref{gamma est}), (\ref{H2 bound})},
and \emph{(\ref{xi est}) where $p$ is replaced by $2$ and, hence, analogous
convergence results which are enough to pass to the limit as $\tau\rightarrow0$.
 }
\end{remark}

\section{Variational comparison principle}\label{S:CP}

In order to pass to the limit as $\mu\rightarrow0$, we establish a uniform (in $\mu$) estimates for solutions $u_\mu$ to $(P)_{\mu}$. To this end, we  compare $u_\mu$ with a  supersolution constant-in-space and independent of $\mu$. On the other hand, we emphasize that solutions to $(P)_{\mu}$ might be non-unique, and hence, no standard comparison principle can be expected for general solutions. In~\cite{Ak}, a similar difficulty has already arisen and  has been overcome by proving a (standard) comparison  principle for supersolutions and \emph{strictly increasing} subsolutions and by  constructing a  strictly increasing subsolution with the aid of a specific structure of the equation with $\gamma(s) = s$.  Therefore in~\cite{Ak} the linearity of $B$ in the form \eqref{mDNE} was crucial and the result could not be extended to genuinely doubly nonlinear cases (cf.~Introduction). In this section, we develop a comparison principle  for \emph{variationally selected} solutions to problem $(P)_{\mu}$; more precisely, given initial data $w^{0}$, $u^{0}$, and $v^{0}$ satisfying $w^{0}\leq u^{0}\leq v^{0}$ a.e.~in $\Omega$,  we prove the existence of solutions  $w,u,v$ to $(P)_{\mu}$ satisfying $w(0)=w^{0}$, $u\left(  0\right)  =u^{0}$, $v\left( 0\right)  =v^{0}$ such that $w\leq u\leq v$ a.e.~in $\Omega\times(0,T)$ (see Proposition \ref{comp princ} below for more details). We further prove that every solution (constructed as in \S \ref{approx prob existence}) to $(P)_{\mu}$ with a constant initial datum is also constant in space (see Lemma \ref{a solution}) and (standard) comparison principle holds for some ODE solutions (see Lemma \ref{comp. princ. cost in space sol.}).
Thus, by combining these facts with the variational comparison principle above, we shall construct upper and lower bounds for such variationally selected solutions to $(P)_{\mu}$ uniformly for $\mu$ (see Proposition \ref{l infty est prop} below).

In what follows, we shall employ a notion of \emph{A-solution} defined by

\begin{definition}[A-solution]
A solution $u = u(x,t)$ to problem $(P)_{\mu}$ is called \emph{A-solution} of $(P)_{\mu}$ if it can be obtained as a limit of some solutions to the discretized problem \eqref{discrete euler}--\eqref{discrete euler 3} as in Section \emph{\ref{approx prob existence}}.
\end{definition}

We need the following

 \begin{lemma}[Constant-in-space solutions]\label{a solution}
  Assume that {\rm \textbf{(A1), (A3)}} are satisfied. Let $u^{0} > 0$ be a constant function over $\Omega$. Then, any A-solution to $(P)_{\mu}$ is constant-in-space over $\Omega$. Moreover, it solves $(P)_{\mu}$ with $\xi=0$.
\end{lemma}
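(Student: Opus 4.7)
My plan is induction on the time step $n$ of the discretization: at each step I will show that any minimizer $u_{n+1}$ of $J_{n+1}$ is constant in space whenever $u_n$ is. Since $u^{0}$ is constant by hypothesis and an A-solution is obtained as the limit of such discrete interpolants, the A-solution will itself be constant in space.

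Assume $u_n \equiv c_n > 0$ inductively. The first step is to analyze the scalar function obtained by restricting $J_{n+1}$ to constant trial functions $\bar c \geq c_n$, namely
\[
F(\bar c) := \mu\tau\hat\gamma\!\left(\tfrac{\bar c - c_n}{\tau}\right) + \tfrac{1}{\tau}\hat\beta(\bar c) - \hat\gamma(\bar c) - \tfrac{\beta(c_n)}{\tau}\bar c,
\]
so that $J_{n+1}(\bar c) = |\Omega|F(\bar c)$. Its derivative equals
\[
H(\bar c) := \mu\gamma\!\left(\tfrac{\bar c - c_n}{\tau}\right) + \tfrac{\beta(\bar c) - \beta(c_n)}{\tau} - \gamma(\bar c).
\]
Since $\gamma(c_n) > 0$, we have $H(c_n) = -\gamma(c_n) < 0$, while the growth estimate used already in \eqref{computation1} (from \textbf{(A1)}) shows that $\mu\gamma((\bar c-c_n)/\tau)$ dominates $\gamma(\bar c)$ once $\tau$ is small enough, so $H(\bar c)\to +\infty$ as $\bar c\to\infty$. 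Differentiating and invoking \textbf{(A3)} (so that $\beta'(\bar c) > 0$ on bounded subsets of $[c_n,\infty)$), the positive contributions $(\mu/\tau)\gamma'((\bar c - c_n)/\tau)$ and $\beta'(\bar c)/\tau$ outweigh $\gamma'(\bar c)$ for $\tau$ small enough, giving strict monotonicity of $H$. Consequently $H$ admits a unique root $\bar c_{n+1} > c_n$, which is the unique minimizer of $F$ on $[c_n,\infty)$.

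The next step is to upgrade this scalar minimizer to a full minimizer of $J_{n+1}$: the constant function $\bar c_{n+1}$ satisfies the Euler--Lagrange inclusion of $J_{n+1}$ (with $\Delta\bar c_{n+1}=0$ and $\xi=0$) and realizes $J_{n+1}(\bar c_{n+1}) = |\Omega|F(\bar c_{n+1})$. For any admissible $u \in H^1(\Omega)$ (necessarily $u \geq c_n$ a.e.\ because of the indicator term), the pointwise inequality $F(u(x)) \geq F(\bar c_{n+1})$ combined with $\|\nabla u\|^2_{L^2(\Omega)} \geq 0$ yields $J_{n+1}(u) \geq |\Omega|F(\bar c_{n+1})$, so $\bar c_{n+1}$ is indeed a minimizer. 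If $u_{n+1}$ is any other minimizer, both inequalities must saturate: $\nabla u_{n+1} \equiv 0$ forces $u_{n+1}$ constant, and $F(u_{n+1}(x)) = F(\bar c_{n+1})$ a.e.\ together with the uniqueness of the minimum of $F$ gives $u_{n+1}\equiv \bar c_{n+1}$.

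Completing the induction, every interpolant $u_\tau(\cdot,t)$ and $\bar u_\tau(\cdot,t)$ is constant in space at each $t$, and this property passes to the limit $\tau \to 0$ via the convergence argument of Section \ref{approx prob existence}, producing a constant-in-space A-solution $u$. Moreover $\xi_{n+1}=0$ at every discrete step, so the weak-$\ast$ limit in \eqref{xi conv} yields $\xi\equiv 0$, proving that $u$ solves $(P)_\mu$ with $\xi=0$. The main technical obstacle I anticipate lies in the strict monotonicity of $H$: it exploits the smallness of $\tau$ relative to $\mu$ (already imposed in Section \ref{approx prob existence}) to ensure that the $(1/\tau)$-scaled convex contributions from $\mu\hat\gamma((\cdot-c_n)/\tau)$ and $\hat\beta(\cdot)/\tau$ dominate the concave term $-\hat\gamma(\cdot)$ uniformly on the range of values explored by the minimizer, a regime where the structural assumptions on $\beta$ and $\gamma$ must be used carefully.
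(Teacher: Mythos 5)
Your argument follows essentially the same route as the paper: induct on the time step, restrict $J_{n+1}$ to constant competitors to get a scalar function $F$ on $[c_n,+\infty)$, use $F'(c_n)=-\gamma(c_n)<0$ to see that any scalar minimizer is strictly above $c_n$ (whence $\xi_{n+1}=0$), observe that a non-constant competitor pays the strictly positive price $\frac12\|\nabla u\|_{L^2(\Omega)}^2$ on top of $\int_\Omega F(u)\ \geq\ |\Omega|\min F$, and pass to the limit $\tau\to 0$.

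There is, however, one step that does not hold as stated: the ``strict monotonicity of $H$'' obtained by differentiating $H$. Assumption \textbf{(A1)} only makes $\gamma$ monotone and continuous, so $\gamma'$ need not exist; and even for smooth $\gamma$ the claimed domination of $\gamma'(\bar c)$ by $(\mu/\tau)\gamma'((\bar c-c_n)/\tau)+\beta'(\bar c)/\tau$ is not a consequence of the structural assumptions (near $\bar c=c_n$ the first term may vanish, e.g.\ $\gamma(s)=s^{p-1}$ with $p>2$, while for large $\bar c$ the second may vanish since $\beta'$ is only assumed positive and non-increasing, and $\gamma'$ may be unbounded). Fortunately this step --- which you single out as the main technical obstacle --- is entirely dispensable, and the paper avoids it. Existence of a minimizer of $F$ on $[c_n,+\infty)$ follows from continuity plus coercivity (the computation behind \eqref{computation1}), and interiority from $F'(c_n)<0$; uniqueness of the scalar minimizer is never needed. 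Indeed, if $u$ is any minimizer of $J_{n+1}$, the chain $J_{n+1}(u)=\int_\Omega F(u)+\frac12\|\nabla u\|_{L^2(\Omega)}^2\geq|\Omega|\min F=J_{n+1}(\bar c_{n+1})$ forces $\nabla u\equiv 0$ directly, and constancy is all the lemma requires (neither the statement nor the paper's proof asserts that the constant value is unique). Replace ``unique root of $H$'' by ``an interior minimizer of $F$'' and delete the monotonicity paragraph; the rest of your proof then goes through and coincides with the paper's.
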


\begin{proof}
We shall prove by induction that any minimizer $u_{n}$ of the functional $J_{n}$ defined by (\ref{J functional}) is constant over $\Omega$. By assumption, $u_0 = u^{0}$ is constant over $\Omega$. Assuming $u_{n}$ to be constant we claim that any minimizer $u_{n+1}$ of the functional $J_{n+1}$ is constant. Indeed, as is shown in Section \ref{approx prob existence}, the function
\[
s\in\lbrack u_{n},+\infty)\longmapsto F_{u_{n}}(s)=\mu\tau\hat{\gamma}\left(
\frac{s-u_{n}}{\tau}\right)  +\frac{1}{\tau}\hat{\beta}(s)-\hat{\gamma
}(s)-\frac{\beta(u_{n})}{\tau}s
\]
 is bounded from below on $[u_{n},+\infty)$ and it is of class $C^1$ and coercive (in $\R$) for $\tau > 0$ small enough. Hence it admits at least one minimizer. 
 Noting that $F_{u_n}'(u_n) = -\gamma(u_n) < 0$ by (A1) and $u_n \geq u_0 > 0$, we find that $u_n$ never minimizes the function $F_{u_n}$. Now, let us recall that the functional $J_{n+1}$ can be decomposed as
\begin{equation*}
J_{n+1}(u)  = \int_{\Omega}F_{u_{n}}(u)+ \tau I\left(  \frac{u-u_{n}}{\tau}\right)
+\frac{\LAMuno}{2}\left\Vert \nabla u\right\Vert _{L^{2}(\Omega)}^{2}.
\end{equation*}
 Let $u_{n+1}$ be a minimizer of $F_{u_{n}}$ and let $u\in H^{1}(\Omega)$ be such that $u\geq u_{n}$ a.e.~in $\Omega$. If $u$ is non-constant, i.e., $\nabla u \not\equiv 0$, it then follows that
 $$
 J_{n+1}(u) > \int_{\Omega}F_{u_{n}}(u) \geq \int_{\Omega}F_{u_{n}}(u_{n+1}) = J_{n+1}(u_{n+1}).
 $$
 Thus, every minimizer of $J_{n+1}$ must be constant. In particular, $u_{n+1}$ is a minimizer of $J_{n+1}$ due to the fact that $J_{n+1}(w) \geq J_{n+1}(u_{n+1})$ for any constant function $w$. Since all the minimizers of $F_{u_n}$ are strictly greater than $u_n$, each minimizer, say $u_{n+1}$, of $J_{n+1}$ solves equation (\ref{discrete euler}) with $\xi_{n+1}=0$. By passing to the limit as $\tau=T/N\rightarrow0$, we get the conclusion of the lemma.
\end{proof}

\begin{lemma}[Comparison of ODE solutions]\label{comp. princ. cost in space sol.}Let $v^{0}>u^{0}>0$ be real numbers. Let $u,v\in C([0,+\infty)) \cap C^{1}\left( (0,+\infty) \right)$ be non-decreasing functions such that%
\begin{align}
\mu\gamma(u_{t})+\partial_{t}\beta\left(  u\right)   &  \leq\gamma
(u)\text{,}\label{ineq 1}\\
\mu\gamma(v_{t})+\partial_{t}\beta\left(  v\right)   &  \geq\gamma
(v)\text{,}\label{ineq2}\\
u(0) =u^{0}\text{,}\ \ v(0) & =v^{0}\text{.}\nonumber%
\end{align}
Then, $u(t)<v(t)$ for all $t>0$.
\end{lemma}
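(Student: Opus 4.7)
The key structural idea is to reduce the two coupled inequalities to ordinary sub/supersolution inequalities for a single scalar autonomous ODE $\dot y = f(y)$, and then conclude by the standard trick of integrating $\Phi'(y)\,\dot y$ where $\Phi$ is an antiderivative of $1/f$.

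\smallskip

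\emph{Step 1 (construction of $f$).} For each $r>0$ consider the map
\[
a\in[0,+\infty)\longmapsto \Psi_r(a):=\mu\gamma(a)+\beta'(r)a.
\]
By \textbf{(A1)} and \textbf{(A3)}, $\beta'(r)>0$ and $\gamma$ is continuous and non-decreasing with $\gamma(0)=0$, so $\Psi_r$ is continuous, strictly increasing, $\Psi_r(0)=0$, $\Psi_r(+\infty)=+\infty$. Hence there is a unique $f(r)>0$ with
\[
\mu\gamma(f(r))+\beta'(r)f(r)=\gamma(r).
\]
Since $\gamma$ and $\beta'$ are continuous, a standard implicit-function argument for strictly monotone continuous equations shows that $f\in C((0,+\infty))$, and $f(r)>0$ for all $r>0$ because $\gamma(r)>0$.

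\smallskip

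\emph{Step 2 (from the coupled inequalities to a scalar ODE inequality).} The inequality
\[
\mu\gamma(u_t)+\beta'(u)u_t=\Psi_u(u_t)\leq \gamma(u)=\Psi_u(f(u))
\]
combined with the strict monotonicity of $\Psi_u$ forces $u_t(t)\leq f(u(t))$ for all $t>0$. Analogously, \eqref{ineq2} yields $v_t(t)\geq f(v(t))$ for all $t>0$.

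\smallskip

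\emph{Step 3 (conclusion via an antiderivative of $1/f$).} Fix a base point (say $u^{0}$) and define
\[
\Phi(r):=\int_{u^{0}}^{r}\frac{\d s}{f(s)}\quad\text{for }r>0.
\]
Because $f$ is continuous and strictly positive on $(0,+\infty)$, $\Phi\in C^{1}((0,+\infty))$ is strictly increasing. Since $u,v$ are non-decreasing with $u(t)\geq u^{0}>0$ and $v(t)\geq v^{0}>0$, the chain rule gives, for all $t>0$,
\[
\frac{\d}{\d t}\Phi(u(t))=\frac{u_t(t)}{f(u(t))}\leq 1,\qquad \frac{\d}{\d t}\Phi(v(t))=\frac{v_t(t)}{f(v(t))}\geq 1.
\]
Integrating from $0$ to $t$ and subtracting yields
\[
\Phi(v(t))-\Phi(u(t))\geq \Phi(v^{0})-\Phi(u^{0})=\int_{u^{0}}^{v^{0}}\frac{\d s}{f(s)}>0,
\]
the last inequality because $v^{0}>u^{0}$ and $1/f>0$. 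By strict monotonicity of $\Phi$ this forces $v(t)>u(t)$ for all $t>0$, as desired.

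\smallskip

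\textbf{Main obstacle.} The only non-obvious move is the inversion step giving $f$: one has to notice that the ``nonlinear friction'' $\mu\gamma(\cdot)$ combined with the \emph{linear-in-derivative} part $\beta'(u)\,\cdot$ produces, for every frozen state $u>0$, a strictly monotone continuous map in the derivative, so that the system inequality collapses to the ordinary scalar inequalities $u_t\leq f(u)$ and $v_t\geq f(v)$. Once this is observed, the rest is the classical comparison argument via $\Phi=\int \d s/f(s)$ (the same device that produces the life-time function $\hat T$ in Theorem~\ref{main thm}), and no Lipschitz or differentiability hypothesis on $\gamma$ beyond what is already in \textbf{(A1)} is needed.
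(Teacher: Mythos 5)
Your proof is correct, and it takes a genuinely different route from the paper's. The paper argues by contradiction at the first touching time $\tilde t:=\sup\{\tau>0: u<v \text{ on } [0,\tau]\}$: subtracting \eqref{ineq 1} from \eqref{ineq2}, using $u_t\geq 0$ together with the \emph{non-increase} of $\beta'$ to replace $\beta'(u)u_t$ by $\beta'(v)u_t$, and invoking the strict monotonicity of $r\mapsto\mu\gamma(r)+\beta'(v)r$, it deduces $u_t\leq v_t$ on $[0,\tilde t]$ and hence $u(\tilde t)-v(\tilde t)\leq u^0-v^0<0$, a contradiction. You instead decouple the two trajectories: freezing the state $r$, you invert the strictly increasing map $a\mapsto\mu\gamma(a)+\beta'(r)a$ to collapse \eqref{ineq 1}--\eqref{ineq2} into the scalar sub/supersolution inequalities $u_t\leq f(u)$, $v_t\geq f(v)$, and then integrate $\Phi'=1/f$ along each trajectory separately. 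This buys two things: your argument never compares $\beta'(u)$ with $\beta'(v)$, so it uses only positivity and continuity of $\beta'$ (not its monotonicity), and it gives the quantitative separation $\Phi(v(t))-\Phi(u(t))\geq\Phi(v^0)-\Phi(u^0)>0$ rather than bare strict inequality, while making transparent the link with the life-span integral $\hat T$ (for $\mu=0$ one has $f=\gamma/\beta'$). The paper's touching-time argument is shorter and avoids constructing $f$. Two minor points to make explicit if you write this up: the continuity of $f$ follows from joint continuity of $(r,a)\mapsto\mu\gamma(a)+\beta'(r)a$ together with strict monotonicity in $a$ (sketching this is worthwhile since no implicit function theorem in the differentiable sense applies), and since $u,v$ are only $C^1$ on the open half-line, the integration in Step 3 should be carried out on $[\vep,t]$ followed by $\vep\to 0^+$ using continuity at $t=0$, or via the mean value theorem on $[0,t]$.
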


 \begin{proof}
  Set
  $$
  \tilde t := \sup \left\{ \tau > 0 \colon u(t) < v(t) \ \mbox{ for all } \ t \in [0,\tau] \right\} \in (0,+\infty].
  $$
  Suppose on the contrary that $\tilde{t}<+\infty$, which implies $u(\tilde {t})=v(\tilde{t})$. By subtracting inequalities (\ref{ineq 1}) and (\ref{ineq2}), we get
\[
\mu\left(  \gamma(u_{t})-\gamma(v_{t})\right)  +\beta^{\prime}(u)u_{t}%
-\beta^{\prime}(v)v_{t}\leq\gamma(u)-\gamma(v)\leq0 \ \text{ on } [0,\tilde{t}].
\]
Since $u_{t}\geq0$ and $\beta^{\prime}$ is non-increasing, we have
\[
\mu\left(  \gamma(u_{t})-\gamma(v_{t})\right)  +\beta^{\prime}(v)\left(
u_{t}-v_{t}\right)  \leq0 \ \text{ on } [0,\tilde{t}].
\]
  As $\mu>0$, $\beta^{\prime}>0$ and $\gamma$ is increasing (hence $r \mapsto \mu \gamma(r) + \beta'(v)r$ is strictly monotone), it follows that $u_{t} \leq v_{t}$ on $[0,\tilde{t}]$. Thus, we obtain $u(\tilde {t})<v(\tilde{t})$, which yields a contradiction.
 \end{proof}

We next establish a \emph{variational comparison principle}.

 \begin{proposition}[Variational comparison principle for $(P)_{\mu}$]\label{comp princ}
  Let $w^{0},u^{0},v^{0}$ satisfy \emph{\textbf{(A2)}} and be such that $0< w^{0}\leq
u^{0}\leq v^{0}$ a.e.~in $\Omega$. Then, there exist A-solutions $w,u,v$ to
$(P)_{\mu}$ corresponding to initial conditions $w(0)=w^{0}$, $u(0)=u^{0}$,
$v(0)=v^{0}$, respectively, such that $w\leq u\leq v$ a.e.~in $\Omega \times (0,T)$.
\end{proposition}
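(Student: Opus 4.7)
The plan is to construct the three solutions at the level of the time-discretization \eqref{discrete euler}--\eqref{discrete euler 3} in such a way that, at every time step, the three discrete functions are pointwise ordered; the ordering then survives the limit $\tau\to 0$. For each of the data $w^0, u^0, v^0$, let $\{w_n\}, \{u_n\}, \{v_n\}$ be discrete minimizers of the step-$(n+1)$ functional \eqref{J functional} with anchor $w_n$, $u_n$, or $v_n$, respectively; denote these functionals by $J^{w}_{n+1}, J^{u}_{n+1}, J^{v}_{n+1}$. I argue by induction on $n$: assuming $w_n\leq u_n\leq v_n$ a.e.\ in $\Omega$, I produce minimizers $w_{n+1}\leq u_{n+1}\leq v_{n+1}$.

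For the inductive step, pick arbitrary minimizers $w_{n+1}, u_{n+1}, v_{n+1}$ (whose existence is supplied by Section~\ref{approx prob existence}) and form the pointwise rearrangement
$$\underline{w}:=\min\{w_{n+1},u_{n+1},v_{n+1}\},\quad \bar{u}:=\mathrm{med}\{w_{n+1},u_{n+1},v_{n+1}\},\quad \bar{v}:=\max\{w_{n+1},u_{n+1},v_{n+1}\}.$$
The key claim is the triple inequality
\begin{equation}\label{triple ineq}
J^{w}_{n+1}(\underline{w}) + J^{u}_{n+1}(\bar{u}) + J^{v}_{n+1}(\bar{v}) \leq J^{w}_{n+1}(w_{n+1}) + J^{u}_{n+1}(u_{n+1}) + J^{v}_{n+1}(v_{n+1}).
\end{equation}
Once \eqref{triple ineq} is established and the rearrangement is admissible (that is, $\underline{w}\geq w_n$, $\bar{u}\geq u_n$, $\bar{v}\geq v_n$), the minimality of the originals forces equality, term by term, in \eqref{triple ineq}, so that $\underline{w}, \bar{u}, \bar{v}$ are themselves minimizers of the three respective functionals and are pointwise ordered by construction. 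Relabelling, the induction closes.

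I verify \eqref{triple ineq} by inspecting each term. (i) The pointwise terms $\int_\Omega\hat\beta(u)$ and $-\int_\Omega\hat\gamma(u)$, together with $\tfrac12\|\nabla u\|^2_{L^2(\Omega)}$, are invariant under pointwise sorting: the first two trivially (sorting is a permutation at each $x\in\Omega$), and the gradient by Stampacchia's a.e.\ identity for $\nabla\min$ and $\nabla\max$, giving $|\nabla w|^2+|\nabla u|^2+|\nabla v|^2=|\nabla\underline{w}|^2+|\nabla\bar{u}|^2+|\nabla\bar{v}|^2$ a.e. (ii) The linear term $-\sum_i(\beta(\cdot_n)/\tau,\cdot)_{L^2(\Omega)}$ decreases (or is unchanged) by Chebyshev's rearrangement inequality, applied to the two comonotone ordered sequences $\beta(w_n)\leq\beta(u_n)\leq\beta(v_n)$ (monotonicity of $\beta$ and the inductive hypothesis) and $\underline{w}\leq\bar{u}\leq\bar{v}$. (iii) Admissibility: $\bar{v}\geq v_{n+1}\geq v_n$ and $\underline{w}\geq \min\{w_n,u_n,v_n\}=w_n$ are immediate; for $\bar{u}$, at most one of $w_{n+1},u_{n+1},v_{n+1}$ is below $u_n$ (since $u_{n+1}\geq u_n$ and $v_{n+1}\geq v_n\geq u_n$), so $\bar{u}\geq u_n$. (iv) The dissipation term $\mu\tau\int_\Omega\hat\gamma((u-u_n)/\tau)$ is the delicate one. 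For it I invoke the \emph{convex rearrangement inequality}: for convex $\hat\gamma$ and fixed $y_1\leq y_2\leq y_3$, the sum $\sum_i\hat\gamma(x_{\sigma(i)}-y_i)$ is minimized over permutations $\sigma$ precisely when $x_{\sigma(i)}$ is itself in ascending order. This reduces via a pairwise-swap argument to the fact that $t\mapsto \hat\gamma(t+h)-\hat\gamma(t)$ is non-decreasing for $h\geq 0$, which holds because $\hat\gamma'=\gamma$ is non-decreasing by \textbf{(A1)}. The anchors $(w_n,u_n,v_n)$ being ordered by the inductive hypothesis, this applies pointwise in $\Omega$ and, after integration, gives the desired inequality.

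Having produced pointwise-ordered discrete solutions at every step, I finally pass to the limit $\tau\to 0$ exactly as in Section~\ref{approx prob existence}: the uniform estimates derived there apply verbatim to each of the three sequences, and the Ascoli-type compactness argument yields strong convergence of the piecewise-affine interpolants in $C([0,T];L^r(\Omega))$ for $r\in[1,2^*)$, so that the a.e.\ pointwise ordering is preserved in the limit. The limits $w,u,v$ are A-solutions of $(P)_\mu$ by the very definition of A-solution and satisfy $w\leq u\leq v$ a.e.\ in $\Omega\times(0,T)$. The hard part is step (iv): the three dissipation terms are anchored at three different points $w_n, u_n, v_n$, so ordinary pointwise rearrangement is insufficient; it is precisely the ordering of the anchors (coming from the induction hypothesis) combined with the convexity of $\hat\gamma$ that pulls the rearrangement inequality through.
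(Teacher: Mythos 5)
Your proof is correct and follows essentially the same route as the paper: the paper establishes the two-function submodularity inequality $J^{a^0}(a\wedge b)+J^{b^0}(a\vee b)\leq J^{a^0}(a)+J^{b^0}(b)$ term by term, using exactly the same ingredients you use (invariance of the local terms under the lattice operations, monotonicity of $\beta$ for the linear term, convexity of $\hat\gamma$ for the dissipation term), and then orders the three minimizers by applying it pairwise three times, whereas you sort all three at once via a three-element convex rearrangement lemma reduced to adjacent swaps. The substance is identical, and your admissibility check for the median and the equality-case argument identifying the sorted functions as minimizers are both sound.
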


\begin{proof}
 For all $a\in H^{1}(\Omega)$, 
 define $J^{a}$ by
\begin{align*}
J^{a}(u)  &  =\mu\int_{\Omega}\tau\hat{\gamma}\left(  \frac{u-a}{\tau}\right)
+\frac{1}{\tau}\int_{\Omega}\hat{\beta}(u)+\tau I\left(
\frac{u-a}{\tau}\right) \\
&\quad +\frac{\LAMuno}{2}||\nabla u||_{L^{2}(\Omega)}^{2}-\int_{\Omega}%
\hat{\gamma}(u)-\left(  \frac{\beta(a)}{\tau},u\right)  _{L^{2}(\Omega
)}\text{.}%
\end{align*}
We claim that
\begin{equation}
 J^{a^{0}}(a\wedge b)+J^{b^{0}}(a\vee b)\leq J^{a^{0}}(a)+J^{b^{0}}(b)
  \quad \mbox{ for } \ a,b\in H^{1}(\Omega),
\label{comp princ cond}%
\end{equation}
 for any $a^{0},b^{0}\in H^{1}(\Omega)$ satisfying $a^0 \leq b^0$ a.e.~in $\Omega$. 
 Indeed, we decompose $J^{a}$ as
\[
J^{a}(u)=A(u)+B^{a}(u)+C^{a}(u),
\]
where
\begin{align*}
A(u)  &  =\frac{1}{\tau}\int_{\Omega}\hat{\beta}(u)+\frac{\LAMuno}%
{2}\left\Vert \nabla u\right\Vert _{L^{2}(\Omega)}^{2}-\int_{\Omega}%
\hat{\gamma}(u)\text{,}\\
B^{a}(u)  &  =\mu\int_{\Omega}\tau\hat{\gamma}\left(  \frac{u-a}{\tau}\right)
+\tau I\left(  \frac{u-a}{\tau}\right)  \text{,}\\
C^{a}(u)  &  =-\left(  \frac{\beta(a)}{\tau},u\right)  _{L^{2}(\Omega)}.
\end{align*}
We shall prove that $A(u)$, $B^{a}(u)$, and $C^{a}(u)$ fulfill condition (\ref{comp princ cond}). One can assume $a \geq a^0$ and $b \geq b^0$ without any loss of generality (otherwise, \eqref{comp princ cond} holds immediately). By a simple calculation, we see
\begin{align*}
A(a\wedge b)+A(a\vee b)  &  =\int_{a\geq b}\left(  \frac{1}{\tau}\hat{\beta
}(b)+\frac{\LAMuno}{2}|\nabla b|^{2}-\hat{\gamma}(b)+\frac{1}{\tau}%
\hat{\beta}(a)+\frac{\LAMuno}{2}|\nabla a|^{2}-\hat{\gamma}(a)\right) \\
&  \quad +\int_{a<b}\left( \frac{1}{\tau}\hat{\beta}(a)+\frac{\LAMuno}%
{2}|\nabla a|^{2}-\hat{\gamma}(a)+\frac{1}{\tau}\hat{\beta}(b)+\frac
{\LAMuno}{2}|\nabla b|^{2}-\hat{\gamma}(b) \right) \\
&  =A(a)+A(b).
\end{align*}
Noting that $a \wedge b \geq a^0$ and $a \vee b \geq b^0$ a.e.~in $\Omega$, one observes
\begin{align*}
B^{a^{0}}(a\wedge b)+B^{b^{0}}(a\vee b)  &  =\mu\int_\Omega \tau\hat{\gamma
}\left(  \frac{a\wedge b-a^{0}}{\tau}\right)  +\tau I\left(
\frac{a\wedge b-a^{0}}{\tau}\right) \\
&  \quad +\mu\int_{\Omega}\tau\hat{\gamma}\left(  \frac{a\vee b-b^{0}}{\tau
}\right)  +\tau I \left(  \frac{a\vee b-b^{0}}{\tau}\right) \\
&  =\int_{a\geq b}\mu\tau\hat{\gamma}\left(  \frac{b-a^{0}}{\tau}\right)
+\int_{a\geq b}\mu\tau\hat{\gamma}\left(  \frac{a-b^{0}}{\tau}\right)
 \\
&  \quad +\int_{a<b}\mu\tau\hat{\gamma}\left(  \frac{a-a^{0}}{\tau}\right) 
+\int_{a<b}\mu\tau\hat{\gamma}\left(  \frac{b-b^{0}}{\tau}\right), 
\end{align*}
and moreover, by $a \geq a^0$ and $b \geq b^0$ a.e.~in $\Omega$,
\begin{align*}
B^{a^{0}}(a)+B^{b^{0}}(b)  &  =\int_{a\geq b}\mu\tau\hat{\gamma}\left(
\frac{a-a^{0}}{\tau}\right)
+\int_{a\geq b}\mu\tau\hat{\gamma}\left(  \frac{b-b^{0}}{\tau}\right)\\
&\quad +\int_{a<b}\mu\tau\hat{\gamma}\left(  \frac{a-a^{0}}{\tau}\right)
 +\int_{a<b}\mu\tau\hat{\gamma}\left(  \frac{b-b^{0}}{\tau}\right).
\end{align*}
Hence, in order to check (\ref{comp princ cond}) for $B^a$, it suffices to show
\begin{align*}
  \int_{a\geq b}\left\{  \mu\tau\hat{\gamma}\left(  \frac{b-a^{0}}{\tau
 }\right) 
 +\mu \tau\hat{\gamma}\left(  \frac{a-b^{0}}{\tau}\right) 
 \right\}
  \leq\int_{a\geq b}\left\{  \mu\tau\hat{\gamma}\left(  \frac{a-a^{0}}{\tau
 }\right) 
 +\mu \tau\hat{\gamma}\left(  \frac{b-b^{0}}{\tau}\right)
 \right\}.
\end{align*}
 Recalling $a^{0} \leq b^{0} \leq b \leq a$ over the region of integration above, we infer that the inequality above holds true by convexity of $\hat{\gamma}$. 
 Finally, we note that
\[
C^{a^{0}}(a\wedge b)+C^{b^{0}}(a\vee b)=-\int_{a\geq b}\frac{\beta(a^{0}%
)}{\tau}b-\int_{a<b}\frac{\beta(a^{0})}{\tau}a-\int_{a\geq b}\frac{\beta
(b^{0})}{\tau}a-\int_{a<b}\frac{\beta(b^{0})}{\tau}b
\]
and
\[
C^{a^{0}}(a)+C^{b^{0}}(b)=-\int_{a\geq b}\frac{\beta(a^{0})}{\tau}a-\int
_{a<b}\frac{\beta(a^{0})}{\tau}a-\int_{a\geq b}\frac{\beta(b^{0})}{\tau}%
b-\int_{a<b}\frac{\beta(b^{0})}{\tau}b\text{.}%
\]
Thus, it remains to check that
\begin{equation}
\int_{a\geq b}\frac{\beta(a^{0})}{\tau}b+\frac{\beta(b^{0})}{\tau}a\geq
\int_{a\geq b}\frac{\beta(a^{0})}{\tau}a+\frac{\beta(b^{0})}{\tau}b\text{.}
\label{eses}
\end{equation}
As $\beta$ is increasing, we have $\beta(a^{0})\leq\beta(b^{0})$. Thus, for $a\geq b$, it holds that $0\leq(a-b)(\beta(b^{0})-\beta(a^{0}))$, i.e., $\beta(a^{0})b+\beta(b^{0})a\geq\beta(a^{0})a+\beta(b^{0})b$, which yields estimate
(\ref{eses}). This proves relation (\ref{comp princ cond}).

 We are now in position to prove the lemma. Suppose that $\delta \leq w^0 \leq u^0 \leq v^0$ a.e.~in $\Omega$.
 Let $w,u,v\in H^{1}(\Omega)$ be minimizers of $J^{w^{0}}$, $J^{u^{0}}$, and $J^{v^{0}}$, respectively. Then, $w\vee v,w\wedge v\in H^{1}(\Omega)$. Moreover, by minimality we particularly have
$$
J^{w^{0}}(w) \leq J^{w^{0}}(w\wedge v),\quad J^{v^{0}}(v) \leq J^{v^{0}}(w\vee v).
$$
By using the fact (\ref{comp princ cond}) with $a=w$, $a^{0}=w^{0}$, $b=v$,
and $b^{0}=v^{0}$, we get
\[
J^{w^{0}}(w)\leq J^{w^{0}}(w\wedge v)\leq J^{w^{0}}(w)+J^{v^{0}}(v)-J^{v^{0}%
}(w\vee v)\leq J^{w^{0}}(w)\text{,}%
\]
whence follows
$$
J^{w^{0}}(w\wedge v) =J^{w^{0}}(w), \quad J^{v^{0}}(w\vee v) =J^{v^{0}}(v).
$$
Thus, $\tilde{w}:=w\wedge v$ and $\tilde{v}:=w\vee v$ also minimize $J^{w^{0}}$ and $J^{v^{0}}$, respectively, and $\tilde{w}\leq\tilde{v}$. By using (\ref{comp princ cond}) again with the choice $a=u$, $a^{0}=u^{0}$, $b=\tilde{v}$, $b^{0}=v^{0}$ and by arguing as above, we deduce that
$v_{1}:=u\vee\tilde{v}$ and $u_{1}:=u\wedge\tilde{v}$ minimize $J^{v^{0}}$ and $J^{u^{0}}$, respectively. Furthermore, (\ref{comp princ cond}) with
$a=\tilde{w}$, $a^{0}=w^{0}$, $b=u$, $b^{0}=u^{0}$ leads us to infer that $w_{1}:=\tilde{w}  \wedge u$ minimizes $J^{w^{0}}$. Obviously, recalling the relation $\tilde{w} \leq\tilde{v}$, we have $w_{1}\leq u_{1}\leq v_{1}$ a.e.~in $\Omega$.

By iterating the above argument we can construct sequences $\left\{  w_{n}\right\}$, $\left\{  u_{n}\right\}$ and $\left\{  v_{n}\right\}$ such that $w_{n+1}$, $u_{n+1}$, and $v_{n+1}$ minimize $J^{w_{n}}$, $J^{u_{n}}$, and
$J^{v_{n}}$, respectively, and $w_{n+1}\leq u_{n+1}\leq v_{n+1}$ for all
$n\in\{0,\dots,N-1\}$. By passing to the limit as $N\rightarrow\infty$ we complete the proof of the lemma.
\end{proof}

Combining all these facts, we obtain

\begin{proposition}[Uniform bounds]
\label{l infty est prop}Let $0<\delta\leq u^{0}\leq M$. Let $z_{M}\in C([0,T_{M})) \cap C^{1}(0,T_{M})$ be a solution of the ODE
\begin{align*}
\partial_{t}\beta(z_{M}(t)) =\gamma(z_{M}(t))\text{,}  \quad 0 < t < T_{M}, \quad z_{M}(0)=M,
\end{align*}
where $T_{M}>0$ is the maximal existence time for $z_{M}$, that is,
$$
T_{M} := \int^\infty_{\beta(M)} \dfrac{\d b}{\gamma \circ \beta^{-1}(b)}.
$$
 Let $z_{\delta}^{\mu}\in C([0,+\infty)) \cap C^{1}(0,+\infty)$ be a solution to
 \begin{equation}\label{zmu}
\mu\gamma(\partial_{t}z_{\delta}^{\mu}(t))+\partial_{t}\beta(z_{\delta}^{\mu}(t))
 =\gamma(z_{\delta}^{\mu}(t))\text{,} \quad 0 < t < +\infty,\quad z_{\delta}^{\mu}(0) =\delta\text{.}%
 \end{equation}
 Then, there exists an A-solution $u$ to $(P)_{\mu}$ with $u(0)=u^{0}$ such that
\[
z_{\delta}^{\mu}(t)\leq u(t,x)\leq z_{M}(t)
\]
for a.e.~$(x,t)\in\Omega\times(0,T_{M})$.
\end{proposition}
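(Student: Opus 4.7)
The plan is to sandwich a suitably chosen A-solution $u$ of $(P)_\mu$ between two constant-in-space A-solutions (which by Lemma \ref{a solution} reduce to scalar ODEs) and then to compare the upper ODE with $z_M$.

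First, the constants $w^0\equiv\delta$ and $v^0\equiv M$ trivially satisfy \textbf{(A2)} and bracket $u^0$ pointwise. Proposition \ref{comp princ} then produces A-solutions $w,u,v$ of $(P)_\mu$ with $w(0)=\delta$, $u(0)=u^0$, $v(0)=M$ and $w\le u\le v$ a.e.\ in $\Omega\times(0,T)$, for every $T\in(0,T_M)$. Since the endpoints $\delta,M$ are spatially constant, Lemma \ref{a solution} forces $w,v$ to be constant in space and to satisfy the equation with $\xi\equiv 0$, so $w(x,t)=z_\delta^\mu(t)$ and $v(x,t)=z_M^\mu(t)$, where $z_M^\mu$ denotes the analogue of $z_\delta^\mu$ with initial value $M$. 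This already delivers the lower bound $z_\delta^\mu(t)\le u(x,t)$, and reduces the upper bound to showing $z_M^\mu(t)\le z_M(t)$.

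Next, I would establish $z_M^\mu(t)\le z_M(t)$ on $[0,T_M)$. As $\partial_t\beta(z_M)=\gamma(z_M)\ge 0$ and $\beta$ is strictly increasing, $z_M$ is non-decreasing, so $\mu\gamma(\partial_t z_M)\ge 0$ and
\[
\mu\gamma(\partial_t z_M)+\partial_t\beta(z_M)\ge\gamma(z_M),
\]
placing $z_M$ in the supersolution class of Lemma \ref{comp. princ. cost in space sol.}, while $z_M^\mu$ satisfies the corresponding subsolution inequality with equality. Since both share the initial value $M$, that lemma cannot be invoked directly. I would instead perturb and compare $z_M^\mu$ with $\tilde z_\epsilon$ solving $\partial_t\beta(\tilde z_\epsilon)=\gamma(\tilde z_\epsilon)$, $\tilde z_\epsilon(0)=M+\epsilon$ on its maximal interval $[0,T_{M+\epsilon})$. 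Lemma \ref{comp. princ. cost in space sol.} then yields $z_M^\mu<\tilde z_\epsilon$, and continuous dependence of the ODE flow on the initial datum (together with $T_{M+\epsilon}\uparrow T_M$) gives $\tilde z_\epsilon\to z_M$ uniformly on compact subsets of $[0,T_M)$ as $\epsilon\to 0^+$, whence $z_M^\mu\le z_M$ on $[0,T_M)$.

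Chaining the estimates yields $z_\delta^\mu(t)\le u(x,t)\le z_M^\mu(t)\le z_M(t)$ for a.e.\ $(x,t)\in\Omega\times(0,T_M)$, which is the desired bound. I expect the main obstacle to be the last step: the two scalar solutions $z_M^\mu$ and $z_M$ share the initial value $M$ and solve genuinely different equations, so Lemma \ref{comp. princ. cost in space sol.} is not directly applicable and a perturbation argument combined with continuous dependence on initial data is required; additional care is needed because $z_M$ lives only on the possibly finite interval $[0,T_M)$, so the comparison has to be carried out on compact subintervals before sending $\epsilon\to 0$.
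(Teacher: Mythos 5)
Your proposal is correct and follows essentially the same route as the paper: Proposition \ref{comp princ} plus Lemma \ref{a solution} to sandwich $u$ between the spatially constant A-solutions $z_\delta^\mu$ and $z_M^\mu$, and then the same $\vep$-perturbation of the initial datum (comparing $z_M^\mu$ with $z_{M+\vep}$ via Lemma \ref{comp. princ. cost in space sol.} and letting $\vep\to 0^+$) to conclude $z_M^\mu\le z_M$. The obstacle you flag at the end — that $z_M^\mu$ and $z_M$ share the initial value so the ODE comparison lemma cannot be applied directly — is precisely the point the paper's proof addresses in the same way.
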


\begin{proof}
Thanks to Proposition \ref{comp princ}, there exist A-solutions $w,u,v$ to problem
 $(P)_{\mu}$ with $w(0)=\delta$, $u(0)=u$ and $v(0)=M$ such that $w\leq u\leq v$ a.e.~in $\Omega\times(0,+\infty)$. By virtue of Lemma \ref{a solution}, one can assume that $w$
and $v$ are constant in space. The functions $z_{\delta}^{\mu}%
,z_{M}^\mu:[0,T_{0})\rightarrow\mathbb{R} _{+}$ defined by $z_{\delta}^{\mu
}(t)=w(x,t)$, $z_{M}^{\mu}(t)=v(x,t)$ solve the Cauchy problems
$$
\mu\gamma(\partial_{t}z_{M}^{\mu}(t))+\partial_{t}\beta(z_{M}^{\mu}(t)) =\gamma
(z_{M}^{\mu}(t)), \quad 0 < t < +\infty, \quad z_{M}^{\mu}(0) =M
 $$
 and
$$
\mu\gamma(\partial_{t}z_{\delta}^{\mu}(t))+\partial_{t}\beta(z_{\delta}^{\mu}(t))
=\gamma(z_{\delta}^{\mu}(t))\text{,} \quad 0 < t < +\infty, \quad z_{\delta}^{\mu}(0) =\delta\text{.}%
$$
Note that $z_{M}^{\mu}$ is strictly increasing. Let $\vep > 0$ and $z_{M+\vep}$ be given similarly to $z_{M}$ of the
statement of the proposition. Then, $z_{M+\vep}$ is also strictly increasing. Thus, as
$\gamma$ takes nonnegative values, we have%
\[
\mu\gamma(\partial_{t}z_{M+\vep}(t))+\partial_{t}\beta(z_{M+\vep}(t))\geq\gamma(z_{M+\vep}(t)), \quad 0 < t < T_{M+\vep}
\]
for all $\mu >0$. By $z_M^\mu(0) < z_{M+\vep}(0)$, applying Lemma \ref{comp. princ. cost in space sol.}, one has $z_{M}^{\mu} < z_{M+\vep}$ on $(0,T_{M+\vep})$. On the other hand, by taking a limit as $\vep \to 0_+$, one can verify that $z_{M+\vep} \to z_M$ locally uniformly on $[0,T_M)$.
 Thus,
\[
 z_{\delta}^{\mu}(t)=w(x,t)\leq u(x,t)\leq v(x,t)=z_{M}^{\mu}(t)\leq z_{M}(t) \ \text{ for a.e.~}
 (x,t) \in\Omega\times(0,T_{M})\text{.}%
\]
 This completes the proof.
\end{proof}

\section{Existence of a local solution to $(P)$}\label{S:conv}

By virtue of Proposition \ref{l infty est prop}, we now obtain uniform
estimates for some solutions to $(P)_{\mu}$ which allow us to pass to the
limit as $\mu\rightarrow0$ and complete the proof of Theorem \ref{main thm}. We focus on the
case $p\geq2$ only. As for the case $1<p<2$, analogous results can be obtained by applying minor modifications to the argument presented here (see also Remark \ref{p small}).

Let $z_{M}$ be the function defined in Proposition \ref{l infty est prop} and
let $\hat{T}_{0}:= T_{M}$ be the corresponding blow-up time (set
$T_{M}=+\infty$ if $z_{M}$ is well defined over the whole half-line $[0,+\infty)$). Then, for every $T_{0}<\hat{T}_{0}$ there exists
$C_0=C_0(T_{0},M)$ (independent of $\mu$) such that
\[
\sup_{t\in\lbrack0,T_{0}]}|z_{M}(t)|\leq C_0\text{.}%
\]
As a consequence of Proposition \ref{l infty est prop}, we have, for every $\mu$, there exists an A-solution $u_{\mu}$ to problem $(P)_{\mu}$ such
that%
\begin{equation}
\sup_{t\in\lbrack0,T_{0}]}\left\Vert u_{\mu}(t)\right\Vert _{L^{\infty}%
(\Omega)}\leq C_0. \label{l infty est}%
\end{equation}

Testing equation (\ref{approx equation}) with $\partial_{t}u_{\mu}$, we obtain
\begin{align*}
 \left(  \partial_{t}\beta\left(  u_{\mu}\right)  ,\partial_{t}u_{\mu
}\right)  _{L^{2}(\Omega)}+\left(  \xi_{\mu},\partial_{t}u_{\mu}\right)
_{L^{2}(\Omega)}+\mu\left(  \gamma(\partial_{t}u_{\mu}),\partial_{t}u_{\mu
}\right)  _{L^{p}(\Omega)}+\frac{\LAMuno}{2}\frac{\mathrm{d}}{\mathrm{d}%
t}\left\Vert \nabla u_{\mu}\right\Vert _{L^{2}(\Omega)}^{2}\\
  =\frac{\mathrm{d}}{\mathrm{d}t}\int_{\Omega}\hat{\gamma}(u_{\mu})\text{.}%
\end{align*}
By using assumption (\ref{gamma2}) and noting that $\left(  \xi_{\mu}%
,\partial_{t}u_{\mu}\right)  _{L^{2}(\Omega)}=0$, we get
\[
\mu C_1 \left\Vert \partial_{t}u_{\mu}\right\Vert _{L^{p}(\Omega)}^{p}%
+\frac{\LAMuno}{2}\frac{\mathrm{d}}{\mathrm{d}t}\left\Vert \nabla u_{\mu
}\right\Vert _{L^{2}(\Omega)}^{2}+\int_{\Omega}\beta^{\prime}(u_{\mu
})|\partial_{t}u_{\mu}|^{2}\leq\frac{\mathrm{d}}{\mathrm{d}t}\int_{\Omega}%
\hat{\gamma}(u_{\mu}) + C \text{.}%
\]
Since $\beta^{\prime}$ is non-increasing and positive, we derive by \eqref{l infty est} that
\[
\mu C_1 \left\Vert \partial_{t}u_{\mu}\right\Vert _{L^{p}(\Omega)}^{p}%
+\frac{\LAMuno}{2}\frac{\mathrm{d}}{\mathrm{d}t}\left\Vert \nabla u_{\mu
}\right\Vert _{L^{2}(\Omega)}^{2}+\beta'(C_0)\int_{\Omega}|\partial_{t}u_{\mu}|^{2}%
\leq\frac{\mathrm{d}}{\mathrm{d}t}\int_{\Omega}\hat{\gamma}(u_{\mu})\text{.}%
\]
By integrating it over $(0,t)$, $t\in\lbrack0,T_{0}]$, and by using assumption (\ref{gamma1.5}) together with \eqref{l infty est}, we obtain
\[
\mu C_1 \left\Vert \partial_{t}u_{\mu}\right\Vert _{L^{p}(0,t;L^{p}(\Omega))}%
^{p}+\frac{\LAMuno}{2}\left\Vert \nabla u_{\mu}(t)\right\Vert
_{L^{2}(\Omega)}^{2}+\beta'(C_0)\left\Vert \partial_{t}u_{\mu}\right\Vert _{L^{2}%
(0,t;L^{2}(\Omega))}^{2}\leq C,
\]
which yields
\begin{align}
\mu\left\Vert \partial_{t}u_{\mu}\right\Vert _{L^{p}(0,T_{0};L^{p}(\Omega
))}^{p}  &  \leq C\text{,}\label{unif est 1}\\
\left\Vert \partial_{t}u_{\mu}\right\Vert _{L^{2}(0,T_{0};L^{2}(\Omega))}  &
\leq C\text{,}\label{unif est 2}\\
\sup_{t\in\lbrack0,T_{0}]}\left\Vert \nabla u_{\mu}(t)\right\Vert _{L^{2}%
(\Omega)}^{2}  &  \leq C\text{.}\nonumber%
\end{align}
Thanks to (\ref{bound b}) and (\ref{l infty est}), we have
\[
\sup_{t\in\lbrack0,T_{0}]}\left\Vert \beta(u_{\mu}(t))\right\Vert _{L^{\infty
}(\Omega)}\leq C\text{.}%
\]
Note that $\partial_{t}\beta(u_{\mu})=\beta^{\prime}(u_{\mu})\partial
_{t}u_{\mu}$. Using $u_{\mu} \geq \delta$, \eqref{beta growth} and estimate (\ref{unif est 2}), we get
\begin{align*}
 \left\Vert \partial_{t}\beta(u_{\mu})\right\Vert _{L^{2}(0,T_{0};L^{2} (\Omega))}^{2} \leq C. 
\end{align*}
By virtue of assumption (\ref{gamma4}) and estimates (\ref{l infty est}) and \eqref{unif est 1}, we
have
\[
\sup_{t\in\lbrack0,T_{0}]}\left\Vert \gamma(u_{\mu}(t))\right\Vert
_{L^{\infty}(\Omega)}\leq C,\quad
 \mu \left\| \gamma(\partial_t u_\mu)\right\|_{L^{p'}(0,t;L^{p'}(\Omega))}^{p'} \leq C.
\]
Thanks to assumption {\bf (A2)}, estimate \eqref{e:xi:2} and
convergence (\ref{xi conv}), we have
\begin{equation*}
\left\Vert \xi_{\mu}\right\Vert _{ L^{\infty}(0,T_0;L^2(\Omega))}\leq\left\Vert \left(  \LAM\Delta u^{0}+\gamma(u^{0})\right)
_{-}\right\Vert _{L^{2}(\Omega)}\leq C.
\end{equation*}
Finally, by comparison in equation (\ref{approx equation}), we conclude that
\[
\left\Vert \Delta u_{\mu}\right\Vert _{L^{p'}(0,T_{0};L^{p'}(\Omega))}\leq
C\text{.}%
\]

Owing to the above uniform estimates, up to some (not relabeled) subsequence
$\mu\rightarrow0$, we obtain the following:%
\begin{alignat}{4}
u_{\mu}  &  \rightarrow u \quad &&\text{ weakly * in }L^{\infty}(0,T_{0};H^{1}(\Omega))\text{,}\nonumber\\
&\quad && \text{ weakly * in }L^{\infty}(\Omega \times (0,T_0))\text{,}\nonumber\\
&\quad &&  \text{ weakly in }H^{1}(0,T_{0};L^{2}(\Omega))\text{,}\label{c:u:H1L2}\\
&\quad && \text{ strongly in }C([0,T_{0}];L^{2}(\Omega))\text{,}\nonumber\\
\xi_{\mu}  &  \rightarrow\xi \quad &&\text{ weakly * in }L^{\infty}(0,T_0;L^2(\Omega))\text{,}\nonumber\\
\beta(u_{\mu})  &  \rightarrow v \quad &&\text{ weakly * in }L^{\infty}(\Omega \times (0,T_0))\text{,}\nonumber\\
& \quad &&  \text{ weakly in }H^{1}(0,T_{0};L^{2}(\Omega))\text{,}\nonumber\\
\Delta u_{\mu}  &  \rightarrow\Delta u \quad &&\text{ weakly in }L^{p'}(0,T_{0}%
;L^{p'}(\Omega))\text{,}\nonumber\\
 \mu \gamma(\partial_t u_\mu) &\rightarrow 0 \quad &&\text{ strongly in } L^{p'}(0,T_0;L^{p'}(\Omega)),\nonumber
\end{alignat}
for some limits
\begin{align*}
u  &  \in H^{1}(0,T_{0};L^{2}(\Omega))\cap L^{\infty}(\Omega \times (0,T_0))\cap L^{\infty}(0,T_{0};H^{1}(\Omega))\text{,}\\
v  &  \in L^{\infty}(\Omega \times (0,T_0))\cap H^{1}(0,T_{0};L^{2}(\Omega))\text{,} \quad \xi \in L^{\infty}( 0,T_0;L^2(\Omega)).
\end{align*}
Here we used the fact that $\mu^{1/p'} \gamma(\partial_t u_\mu)$ is uniformly bounded in $L^{p'}(0,T_{0};L^{p'}(\Omega))$. In addition, if $(\Delta u^0+\gamma(u^0))_- \in L^q(\Omega)$ for $q \in (2,+\infty)$ ($q = +\infty$, respectively), then by Lemma \ref{xi regularity} it follows that $\xi \in L^\infty(0,T_0;L^q(\Omega))$ ($\xi \in L^\infty(\Omega \times (0,T_0))$, respectively). Furthermore, by embedding theorem (see, e.g.,~\cite{LM}) we also have $u \in C_w([0,T_0];H^1(\Omega))$.
Moreover,
\begin{equation}\label{gam-conv}
\gamma(u_{\mu})\rightarrow\gamma(u) \quad \text{ strongly in }L^{m}(\Omega
\times\left(  0,T_{0}\right)  )\text{ for all }m\in\lbrack1,+\infty)\text{.}%
\end{equation}
By demiclosedness of maximal monotone operators, we deduce $v=\beta(u)$.
Furthermore, $u$ solves
\begin{equation}\label{rel}
\partial_{t}\beta(u)+\xi=\LAM\Delta u+\gamma(u)\text{.}%
\end{equation}
Hence it yields that $\Delta u \in L^2(0,T_0;L^2(\Omega))$ by comparison.
Note also that, as a consequence of the above convergences, $\partial_{t}%
u\geq0$ a.e.~in $\Omega\times(0,T)$ since $\partial_{t}u_{\mu}$ is
non-negative. We now identify the limit $\xi$ as a section of $\partial
I_{[0,+\infty)}(\partial_{t}u)$. By comparison in equation
(\ref{approx equation}) and using (\ref{gamma2}), we have
\begin{align}
&  \limsup_{\mu\rightarrow0}\int_{0}^{T_{0}}(\xi_{\mu},\partial_{t}u_{\mu
})_{L^{2}(\Omega)}\nonumber\\
& \leq \limsup_{\mu\rightarrow0}\int_{0}^{T_{0}}\left(  -\partial_{t}\beta
(u_{\mu})+\gamma(u_{\mu}),\partial_{t}u_{\mu}\right)  _{L^{2}\left(
 \Omega\right)}\nonumber\\
 &\quad +\limsup_{\mu\rightarrow0}\int_{0}^{T_{0}}\left(  \LAM\Delta u_{\mu}-\mu\gamma(\partial_{t}u_{\mu}),\partial_{t}u_{\mu}\right)
_{L^{p}\left(  \Omega\right)  }\nonumber\\
&  \leq-\frac{\LAMuno}{2}\liminf_{\mu\rightarrow0}\left\Vert \nabla
u_{\mu}(T_{0})\right\Vert _{L^{2}(\Omega)}^{2}+\frac{\LAMuno}{2}\left\Vert
 \nabla u^{0}\right\Vert _{L^{2}(\Omega)}^{2}
 -\liminf_{\mu\to0}\int_{0}^{T_{0}}\left( \partial_{t}\beta(u_{\mu}),\partial_{t}u_{\mu}\right)_{L^{2}(\Omega)}
 \nonumber\\
&\quad +\lim
_{\mu\rightarrow0}\int_{0}^{T}\left(  \gamma(u_{\mu}),\partial_{t}u_{\mu
}\right)_{L^{2}\left(  \Omega\right)  }+\lim_{\mu\rightarrow0}C\mu.
\label{minty}%
\end{align}
Thanks to \eqref{c:u:H1L2} and \eqref{gam-conv}, we have
\[
\lim_{\mu\rightarrow0}\int_{0}^{T}\left(  \gamma(u_{\mu}),\partial_{t}u_{\mu
}\right)  _{L^{2}\left(  \Omega\right)  }=\int_{0}^{T}\left(  \gamma
(u),\partial_{t}u\right)  _{L^{2}\left(  \Omega\right)  }\text{.}%
\]
By the weak lower-semicontinuity of the norm,
\[
\frac{\LAMuno}{2}\liminf_{\mu\rightarrow0}\left\Vert \nabla u_{\mu}%
(T_{0})\right\Vert _{L^{2}(\Omega)}^{2}\geq\frac{\LAMuno}{2}\left\Vert
\nabla u(T_{0})\right\Vert _{L^{2}(\Omega)}^{2}\text{.}%
\]
Arguing as in \cite[Lemma 3.7]{Ak}, we can prove that%
\[
\liminf_{\mu\rightarrow0}\int_{0}^{T_{0}}\left(  \partial_{t}\beta(u_{\mu
}),\partial_{t}u_{\mu}\right)  _{L^{2}\left(  \Omega\right)  }\geq \int
_{0}^{T_{0}}\left(  \partial_{t}\beta(u),\partial_{t}u\right)  _{L^{2}\left(
\Omega\right)  }\text{.}%
\]
By substituting these facts and \eqref{rel} into inequality (\ref{minty}), we get
\[
\limsup_{\mu\rightarrow0}\int_{0}^{T_{0}}(\xi_{\mu},\partial_{t}u_{\mu
})_{L^{2}(\Omega)}\leq\int_{0}^{T_{0}}(\xi,\partial_{t}u)_{L^{2}(\Omega)}.
\]
Thus, by the demiclosedness of maximal monotone operators, it follows that $\xi\in \partial_{L^{2}(\Omega)}I(\partial_{t}u)$, i.e., $\xi\in\partial
I_{[0,+\infty)}(\partial_{t}u)$ a.e.~in $\Omega\times\left(  0,T_{0}\right)
$. Thus, $u$ solves $(P)$ on $[0,T_0]$.

Furthermore, note that, if condition (\ref{sublinear growth}) is satisfied,
then, for every $M\in\mathbb{N}$, there exists a solution $z_{M}$ to the Cauchy
problem
\begin{align}
\partial_{t}\beta(z_{M}) =\gamma(z_{M})\text{,}\label{ode1} \quad
z_{M}(0) =M
\end{align}
over the whole half-line $[0,+\infty)$. Indeed, (\ref{ode1}) is equivalent, by the change of variables $v=\beta(z_{M})$,
to
\begin{align*}
\partial_{t}v =\gamma \circ \beta^{-1}(v)\text{,}\quad
v(0) =\beta(M)\text{.}%
\end{align*}
Hence, as $\gamma\circ\beta^{-1}$ is at most of affine growth, $v$ is defined
globally in time. Thus, $T_{0}$ can be chosen arbitrarily large.

Let us now give an estimate for life-span of $u$. Thanks to
Proposition \ref{l infty est prop}, the approximating sequence $u_{\mu}$ can
be chosen such that $u_{\mu}\geq z_{\delta}^{\mu}$ a.e.~in $\Omega
\times (0,T_{0})$. Note that $z_{\delta}^{\mu}$ is positive and uniformly
(with respect to $\mu$) bounded over $[0,T_{0}]$, i.e., $0\leq z_{\delta}%
^{\mu}(t)\leq C$ for all $t\in\lbrack0,T_{0}]$. Moreover, using an equivalent form
$\mu\gamma(\partial_{t}z_{\delta}^{\mu})+\beta^{\prime}(z_{\delta}^{\mu})\partial_{t}z_{\delta}^{\mu}=\gamma (z_{\delta}^{\mu})$ of \eqref{zmu} and using inequality $\beta^{\prime}(z_{\delta}^{\mu}%
)\geq\beta^{\prime}(C)>0$, we can easily obtain uniform bounds on
$\partial_{t}z_{\delta}^{\mu}$ which allow us to pass to the limit as
$\mu\rightarrow0$ and prove that the limit
\[
z_{\delta}(t):=\lim_{\mu\rightarrow0}z_{\delta}^{\mu}(t)
\]
solves $\beta^{\prime}(z_{\delta})\partial_{t}z_{\delta}=\gamma(z_{\delta})$,
$z_{\delta}(0)=\delta$. As a consequence, we have
\[
z_{\delta}(t)=\lim_{\mu\rightarrow0}z_{\delta}^{\mu}(t)\leq\lim_{\mu
\rightarrow0}u_{\mu}(x,t)=u(x,t)\text{ for a.e.~}(x,t)\in\Omega\times
(0,T_{0})\text{.}%
\]
In particular, $T_{0}$ is smaller than the blow-up time for $z_{\delta}$ which
is given by
\[
\hat{T}(\delta)=\int_{\delta}^{+\infty} \dfrac{\beta^{\prime}(r)}{\gamma(r)} \,\mathrm{d}%
r = \int^{+\infty}_{\beta(\delta)} \dfrac{\d b}{\gamma \circ \beta^{-1}(b)}\text{.}%
\]
This completes the proof of Theorem \ref{main thm}. \qed

\section{Weak solvability for non-negative initial data} \label{sec deg data}

In this section, we discuss solvability of (P) for \emph{non-negative} initial data $u^{0}$; more precisely, $u^{0}$ is allowed to vanish on a subset of $\Omega$ with positive measure. Note that, in this case, \eqref{beta growth} is no longer available (see also Remark \ref{R:ex}), and hence, estimates for $\partial_{t}\beta(u)$ (or for $\beta^{\prime}(u)$) do not follow in the same way as before due to the singularity of $\beta^{\prime}$ at $0$. As a consequence, no estimate for $\Delta u $ in a Lebesgue space is derived. Here we shall employ a weaker notion of solutions for the problem $(P)$. To this aim, we assume that
\begin{equation}\label{B}
 \sqrt{\beta'} \in L^1_{loc}([0,+\infty)),
\end{equation}
which will be used to define an auxiliary function $B$ in \eqref{B-def} below (concerning Remark \ref{R:ex}, $\alpha$ is constrained to be less than 2).
Let us recall that
\[
\partial I_{[0,+\infty)}(s)=
\begin{cases}
\{0\} & \text{if } \ s>0\text{,}\\
(-\infty,0] & \text{if }\ s=0\text{.}%
\end{cases}
\]
Then for $\partial_t u, \xi \in L^2(\Omega)$, inclusion $\xi\in\partial_{L^{2}(\Omega)}I(\partial_{t}u)$ is equivalent
to the following
 \begin{align}
\partial_{t}u \geq0 \ \text{ a.e.~in }\Omega \times (0,\infty)\text{,}\nonumber\\
\int_{\Omega}\xi\psi \leq0 \quad \text{ for all }\psi\in L^{2}(\Omega)\text{, }%
\psi\geq0 \ \text{ a.e.~in }\Omega\text{,}\label{pre weak def 2}\\
\int_{\Omega}\xi\partial_{t}u \geq0.\label{pre weak def 3}%
 \end{align}
By virtue of equation (\ref{target eq}) along with (\ref{pre weak def 2}), we note that any (regular) solution $u$ to $(P)$ must satisfy
\[
\int_{\Omega}\left(  -\partial_{t}\beta\left(  u\right)  +\LAM\Delta
u+\gamma(u)\right)  \psi\leq0 \quad \mbox{ for all } \psi \in L^2(\Omega) \mbox{ satisfying } \psi \geq 0 \mbox{ a.e.~in } \Omega.
\]
Integrate both sides in time, integrate by parts and choose $\psi\in H^{1}(0,t;H^{1}(\Omega))$ satisfying $\psi(t)=0$ and $\psi\geq0$. We then find that
\begin{equation}
\int_{0}^{t}\int_{\Omega}\beta(u)\partial_{t}\psi+\int_{\Omega} \left( \beta (u^{0})\psi(0)-\LAM\nabla u \cdot \nabla\psi+\gamma(u)\psi \right) \leq  0.\label{weak def}%
\end{equation}
Defining the function $B$ by
\begin{equation}\label{B-def}
B(s)=\int_{0}^{s}\sqrt{\beta^{\prime}(r)} \, \mathrm{d}r \quad \mbox{ for } \ s \geq 0
\end{equation}
and integrating (\ref{pre weak def 3}) in time, we get
\begin{align}
0  & \leq\int_{0}^{t}\int_{\Omega}\left(  -\partial_{t}\beta\left(  u\right)
+\LAM\Delta u+\gamma(u)\right)  \partial_{t}u\nonumber\\
& = -\int_{0}^{t}\int_{\Omega}\left\vert \partial_{t}B(u)\right\vert^{2}-\frac{\LAMuno}{2}\left\Vert \nabla u(t)\right\Vert_2^{2}+\frac
{\LAMuno}{2}\left\Vert \nabla u^{0}\right\Vert_2^{2}+\int_{0}^{t}%
\int_{\Omega}\gamma(u)\partial_{t}u\text{.}\label{weak def 2}%
\end{align}
 We note that, for $u$ smooth enough, \eqref{weak def} and \eqref{weak def 2} imply 
$$
-\partial_{t}\beta\left(  u\right) + \LAM\Delta u+\gamma(u) \in \partial_{L^2(\Omega)} I(\partial_t u).
$$
Moreover, we stress that (\ref{weak def}) and (\ref{weak def 2}) involve neither $\partial_t \beta(u)$ nor $\beta^{\prime}(u)$. In order to take the advantage of this formulation, we introduce a weaker notion of solutions to $(P)$ in the following (cf. \cite{RoRo} for an analogous definition):

\begin{definition}[Weak solutions] A function
$$
u\in L^{\infty}(\Omega\times(0,T))\cap L^{2}%
(0,T;H^{1}(\Omega))\cap H^{1}(0,T;L^{2}(\Omega))
$$ 
is called a \emph{weak solution} to problem $(P)$ on $[0,T]$ if
$B(u)\in H^{1}(0,T;L^{2}(\Omega))$ and $u$ satisfies $u(0)=u^{0}$, $\partial_{t}u\geq0$ a.e.~in
$\Omega\times(0,T)$ and \emph{(\ref{weak def})-(\ref{weak def 2})} for all $t \in (0,T]$.
\end{definition}

The following theorem is concerned with existence of a weak solution to $(P)$ for nonnegative initial data $u^{0} \geq 0$.

\begin{theorem}[Existence of weak solutions for non-negative data]
Let assumptions \emph{\textbf{(A1)}}, \emph{\textbf{(A3)}} and \eqref{B} be satisfied.
Moreover, let $u^{0}\in H^{2}(\Omega)\cap L^{\infty}(\Omega)$  satisfy $\partial_\nu u^0 =0$ on $\partial \Omega$ and  $u^{0}\geq 0$ a.e.~in $\Omega$. Then, there exists $T_0 > 0$ such that $(P)$ admits a weak solution on $[0,T_0]$.
\end{theorem}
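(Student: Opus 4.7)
The plan is to regularize the (possibly vanishing) initial datum by translation, apply Theorem~\ref{main thm} at each level, derive uniform estimates controlled only by the $L^\infty$-bound on $u^0$, and then pass to the limit in the weak formulation \eqref{weak def}--\eqref{weak def 2}. First, for each $\delta\in(0,1)$ set $u^0_\delta:=u^0+\delta\in H^2(\Omega)\cap L^\infty(\Omega)$, so that $u^0_\delta\geq\delta>0$, $\partial_\nu u^0_\delta=\partial_\nu u^0=0$ and $\|u^0_\delta\|_{L^\infty(\Omega)}\leq M:=\|u^0\|_{L^\infty(\Omega)}+1$. By Theorem~\ref{main thm}, a strong solution $u_\delta$ exists on $[0,T_{\max}(\delta)]$ with $T_{\max}(\delta)\geq \hat T(M)>0$; hence we may fix $T_0\in(0,\hat T(M))$ independent of $\delta$ and work on $[0,T_0]$. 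Moreover, Proposition~\ref{l infty est prop} produces a variationally selected $u_\delta$ with $0\leq u_\delta(x,t)\leq z_M(t)\leq C_0$ a.e.~on $\Omega\times(0,T_0)$, which is the uniform $L^\infty$-bound we need.

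Next, I will test the equation $\partial_t\beta(u_\delta)+\xi_\delta=\Lambda\Delta u_\delta+\gamma(u_\delta)$ by $\partial_t u_\delta$ and use that $(\xi_\delta,\partial_t u_\delta)_{L^2(\Omega)}=0$ together with $\beta'(u_\delta)(\partial_t u_\delta)^2=|\partial_t B(u_\delta)|^2$ to obtain
\begin{equation*}
\int_0^t\int_\Omega |\partial_t B(u_\delta)|^2\,\mathrm d x\,\mathrm d s+\frac{\Lambda}{2}\|\nabla u_\delta(t)\|_{L^2(\Omega)}^2=\frac{\Lambda}{2}\|\nabla u^0\|_{L^2(\Omega)}^2+\int_\Omega\hat\gamma(u_\delta(t))-\int_\Omega\hat\gamma(u^0_\delta).
\end{equation*}
Since $\hat\gamma$ is bounded on $[0,C_0]$ by \eqref{gamma1.5}, this yields uniform bounds on $\partial_t B(u_\delta)$ in $L^2(0,T_0;L^2(\Omega))$ and on $\nabla u_\delta$ in $L^\infty(0,T_0;L^2(\Omega))$. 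Crucially, because $\beta'$ is non-increasing and $u_\delta\leq C_0$, we have $\beta'(u_\delta)\geq\beta'(C_0)>0$, hence
\[
\beta'(C_0)\|\partial_t u_\delta\|_{L^2(\Omega\times(0,T_0))}^2\leq \|\partial_t B(u_\delta)\|_{L^2(\Omega\times(0,T_0))}^2\leq C,
\]
so $\partial_t u_\delta$ is also bounded in $L^2(0,T_0;L^2(\Omega))$ independently of $\delta$; this bound, unlike the one on $\partial_t\beta(u_\delta)$, survives the limit $\delta\to 0$ because $C_0$ is fixed.

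Armed with uniform bounds on $u_\delta$ in $L^\infty(\Omega\times(0,T_0))\cap L^\infty(0,T_0;H^1(\Omega))\cap H^1(0,T_0;L^2(\Omega))$ and on $\partial_t B(u_\delta)$ in $L^2(0,T_0;L^2(\Omega))$, I extract a subsequence along which $u_\delta\rightharpoonup u$ weakly in $H^1(0,T_0;L^2(\Omega))$ and weakly-$*$ in $L^\infty(0,T_0;H^1(\Omega))\cap L^\infty(\Omega\times(0,T_0))$, and by an Aubin--Lions argument $u_\delta\to u$ strongly in $C([0,T_0];L^2(\Omega))$ and a.e.~in $\Omega\times(0,T_0)$. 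By continuity and uniform boundedness, $\beta(u_\delta)\to\beta(u)$ and $\gamma(u_\delta)\to\gamma(u)$ strongly in $L^r$ for every $r<\infty$, and $B(u_\delta)\to B(u)$ in the same sense; since $\partial_t B(u_\delta)\rightharpoonup w$ in $L^2$ with $B(u_\delta)\to B(u)$, one identifies $w=\partial_t B(u)$ in the distributional sense, so $B(u)\in H^1(0,T_0;L^2(\Omega))$. Passing to the limit in \eqref{weak def}, which is an integrated-by-parts form linear in the unknowns up to the nonlinearities $\beta,\gamma$, is then straightforward: the $\partial_t\psi$-term uses strong convergence of $\beta(u_\delta)$, the gradient term uses weak convergence of $\nabla u_\delta$, and the $\gamma$-term uses strong convergence. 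Finally, passing to the limit in \eqref{weak def 2} requires weak lower semicontinuity: for each fixed $t\in[0,T_0]$ one has $\nabla u_\delta(t)\rightharpoonup\nabla u(t)$ in $L^2(\Omega)$ (since the sequence is equicontinuous into $L^2(\Omega)$ and weakly-$*$ bounded in $H^1$), so $\liminf_{\delta\to 0}\|\nabla u_\delta(t)\|_{L^2}^2\geq\|\nabla u(t)\|_{L^2}^2$, while $\liminf\int_0^t\!\!\int|\partial_t B(u_\delta)|^2\geq\int_0^t\!\!\int|\partial_t B(u)|^2$ and the $\gamma$-term converges; taking $\limsup$ of the right-hand side preserves the inequality $0\leq(\cdots)$. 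The non-negativity $\partial_t u\geq 0$ is inherited from $\partial_t u_\delta\geq 0$, and $u(0)=u^0$ follows from the $C([0,T_0];L^2(\Omega))$-convergence. The main technical obstacle is the semicontinuity step for \eqref{weak def 2} combined with the identification of $\partial_t B(u)$ as the weak limit of $\partial_t B(u_\delta)$; the hypothesis \eqref{B} is used precisely to ensure that $B$ is a well-defined, continuous and strictly increasing primitive on $[0,+\infty)$, so that strong a.e.~convergence of $u_\delta$ transfers to strong convergence of $B(u_\delta)$ and the distributional identification of the time derivative goes through.
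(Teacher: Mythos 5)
Your proposal is correct and follows essentially the same route as the paper: shift the initial datum by a positive constant, invoke Theorem \ref{main thm} and the uniform $L^\infty$-bound from Proposition \ref{l infty est prop} to get a $\delta$-independent existence time and sup-bound, test by $\partial_t u_\delta$ to bound $\partial_t B(u_\delta)$ and hence (via $\beta'(u_\delta)\geq\beta'(C_0)>0$) $\partial_t u_\delta$ in $L^2$, and pass to the limit in \eqref{weak def}--\eqref{weak def 2} by strong/weak convergence and weak lower semicontinuity. The key observations — that the $L^2$-bound on $\partial_t u_\delta$ survives $\delta\to0$ while the bound on $\partial_t\beta(u_\delta)$ does not, and that hypothesis \eqref{B} makes $B$ a well-defined continuous primitive for the identification $\bar B=B(u)$ — are exactly the ones the paper relies on.
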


\begin{proof}
For every $m\in\mathbb{N}$, let us define an approximating initial datum
$u_{m}^{0}$ by
\[
u_{m}^{0}=u^{0}+1/m\text{. }%
\]
Let $\left(  u_{m},\xi_{m}\right)  $ be a strong solution on $[0,T_0]$ of $(P)$ with the
initial condition $u_{m}(0)=u_{m}^{0}$ for some $T_0 > 0$. Indeed, existence is guaranteed by
Theorem \ref{main thm}. Since the family of initial data $\{u_{m}^{0}\}$ is
uniformly bounded in $L^{\infty}(\Omega)$, local existence time $T_0$ can be taken uniformly in $m$ (see Theorem \ref{main thm}), and moreover, 
\begin{equation}\label{um-ub}
\sup_{t\in\lbrack0,T_0]}\left\Vert u_{m}\left(  t\right)  \right\Vert
_{L^{\infty}\left(  \Omega\right)  }\leq C
\end{equation}
for some $C \geq 0$. Testing equation (\ref{target eq}) with
$\partial_{t}u_{m}$ and recalling $\left(  \xi_{m},\partial_{t}u_{m}\right)
_{L^{2}(\Omega)}=0$ a.e.~in $(0,T_0)$, we get
\begin{equation}
\int_{\Omega}\beta^{\prime}(u_{m})\left\vert \partial_{t}u_{m}\right\vert
^{2}+\frac{\LAMuno}{2}\frac{\mathrm{d}}{\mathrm{d}t}\left\Vert \nabla
u_{m}\right\Vert _{L^{2}(\Omega)}^{2}=\int_{\Omega}\gamma(u_{m})\partial
_{t}u_{m}\text{.}\label{st}%
\end{equation}
Thanks to \eqref{um-ub}, repeating a similar argument to \S \ref{S:conv}, we can derive the
following uniform estimates%
\begin{align*}
\sup_{t\in\lbrack0,T_0]}\left\Vert \xi_{m}(t)\right\Vert _{L^{2}(\Omega)}
& \leq C\text{,}\\
\sup_{t\in\lbrack0,T_0]}\left\Vert \nabla u_{m}(t)\right\Vert _{L^{2}(\Omega)}
& \leq C\text{,}\\
\left\Vert \partial_{t}u_{m}\right\Vert _{L^{2}(0,T_0;L^{2}(\Omega))}  & \leq
C\text{.}%
\end{align*}
By extraction of a not relabeled subsequence, we get the following
convergences as $m\rightarrow+\infty$:%
\begin{alignat}{4}
 u_{m}&\rightarrow u \quad &&  \text{ weakly * in }L^{\infty}(\Omega \times (0,T_0))\text{,}\nonumber\\
 &  \quad &&\text{ weakly * in }L^{\infty}(0,T_0;H^{1}(\Omega))\text{,}\nonumber\\
 &  \quad &&\text{ weakly in }H^{1}(0,T_0;L^{2}(\Omega)),\label{weak conv um}\\
 &  \quad &&\text{ strongly in } C([0,T_0];L^{2}(\Omega)) \text{,}\nonumber\\
 \beta(u_{m})&\rightarrow\beta(u) \quad && \text{ strongly in } C([0,T_0];L^2(\Omega)),\nonumber\\
 \gamma(u_{m})&\rightarrow\gamma(u) \quad && \text{ weakly * in }L^{\infty}(\Omega \times (0,T_0))\text{,}\nonumber\\
 & &&\text{ strongly in } L^2(0,T_0;L^2(\Omega)),\nonumber\\
 \xi_{m} &\rightarrow \xi \quad && \text{ weakly * in }L^{\infty}(0,T_0;L^2(\Omega)).\nonumber
\end{alignat}
Furthermore, we immediately observe that $u_{m}^{0}\rightarrow u^{0}$ and $\beta
 (u_{m}^{0})\rightarrow\beta(u^{0})$ strongly in $L^{2}(\Omega)$.

Finally, fix $t\in(0,T_0]$ and test equation (\ref{target eq}) with some $\psi\in
H^{1}(0,t;H^{1}(\Omega))$ satisfying $\psi(t)=0$, $\psi\geq0$ a.e.~in
$\Omega\times (0,t)$. Since $\xi_{m}\leq0$ a.e.~in $\Omega \times (0,t)$, by integration by parts, one gets
\[
\int_{0}^{t}\int_{\Omega}\beta(u_{m})\partial_{t}\psi+\int_{\Omega}\beta
(u_{m}^{0})\psi(0)-\int_{0}^{t}\int_{\Omega}
\LAM\nabla u_{m} \cdot \nabla\psi+\gamma(u_{m})\psi
=\int_{0}^{t}\int_{\Omega}\xi_{m}\psi\leq0\text{.}%
\]
Using the above convergences and taking a limit, we derive (\ref{weak def}). Note by (\ref{st}) that $\partial
_{t}B(u_{m})=\sqrt{\beta^{\prime}(u_{m})}\partial_{t}u_{m}$ is uniformly
bounded in $L^{2}(0,T_0;L^{2}(\Omega))$. Moreover, since $B$ is continuous and
increasing with $B(0)=0$, we have, by \eqref{st},
\[
\left\Vert B(u_{m})\right\Vert _{H^{1}(0,T_0;L^{2}(\Omega))}\leq C,
\]
whence follows,
\[
B(u_{m})\rightarrow\bar{B} \quad \text{ weakly in } H^{1}(0,T_0;L^{2}(\Omega))
\]
to some limit $\bar{B}\in H^{1}(0,T_0;L^{2}(\Omega))$. Using the continuity
of $B$ and \eqref{st}, we easily prove that $\bar{B}=B(u)$. We further derive as in (\ref{minty}) that
\begin{align*}
&  0\leq\limsup_{m\rightarrow\infty}\int_{0}^{t}(\xi_{m},\partial_{t}%
u_{m})_{L^{2}(\Omega)}\\
&  \leq-\frac{\LAMuno}{2}\liminf_{m\rightarrow\infty}\left\Vert \nabla
u_{m}(t)\right\Vert _{L^{2}(\Omega)}^{2}+\frac{\LAMuno}{2}\left\Vert
\nabla u^{0}\right\Vert _{L^{2}(\Omega)}^{2}\\
& \quad -\liminf_{m\rightarrow\infty}\int_{0}^{t}\left(  \partial_{t}\beta
(u_{m}),\partial_{t}u_{m}\right)  _{L^{2}\left(  \Omega\right)  }%
+\lim_{m\rightarrow\infty}\int_{0}^{t}\left(  \gamma(u_{m}),\partial_{t}%
u_{m}\right)  _{L^{2}\left(  \Omega\right)  }\\
&  \leq-\frac{\LAMuno}{2}\left\Vert \nabla u(t)\right\Vert _{L^{2}%
(\Omega)}^{2}+\frac{\LAMuno}{2}\left\Vert \nabla u^{0}\right\Vert
_{L^{2}(\Omega)}^{2}+\int_{0}^{t}\left(  \gamma(u),\partial_{t}u\right)
_{L^{2}\left(  \Omega\right)  }\\
& \quad -\liminf_{m\rightarrow\infty}\int_{0}^{t}\left\Vert \partial_{t}%
B(u_{m})\right\Vert _{L^{2}(\Omega)}^{2}\\
&  \leq-\frac{\LAMuno}{2}\left\Vert \nabla u(t)\right\Vert _{L^{2}%
(\Omega)}^{2}+\frac{\LAMuno}{2}\left\Vert \nabla u^{0}\right\Vert
_{L^{2}(\Omega)}^{2} +\int_{0}^{t}\left(  \gamma(u),\partial_{t}u\right)  _{L^{2}\left(
\Omega\right)  } - \int_{0}^{t}\left\Vert \partial_{t}B(u)\right\Vert
_{L^{2}(\Omega)}^{2}\text{,}%
\end{align*}
which implies (\ref{weak def 2}). Finally, since $\partial_{t}u_{m}\geq0$ a.e.~in $\Omega \times (0,T_0)$, thanks
to convergence (\ref{weak conv um}), we deduce that $\partial_{t}u\geq0$ a.e.~in $\Omega \times (0,T_0)$.
Thus, $u$ is a weak solution to $(P)$ on $[0,T_0]$.
\end{proof}


\begin{thebibliography}{99}                                                                                               %


\bibitem {Ak}G.~Akagi, 
Local solvability of a fully nonlinear parabolic equation, 
Kodai Math.~J. {\bf 37} (2014), 
702--727.

\bibitem {A11}G.~Akagi,
Doubly nonlinear evolution equations with non-monotone perturbations in reflexive Banach spaces,
J.~Evol.~Equ.
{\bf 11} (2011), 
1--41.

\bibitem {AkKi13}G.~Akagi, M.~Kimura,
Unidirectional evolution equations of diffusion type,
\texttt{arXiv:1501.01072 [math.AP]}.

\bibitem {Arai}T.~Arai, 
On the existence of the solution for $\partial \varphi(u'(t)) + \partial \psi(u(t)) \ni f(t)$,
J.~Fac.~Sci.~Univ.~Tokyo Sec.~IA Math. 
{\bf 26} (1979), 
75--96.

\bibitem {AsFrK}M.~Aso, M.~Fr\'{e}mond, N.~Kenmochi,
Phase change problems with temperature dependent constraints for the volume fraction velocities,
Nonlinear Anal.
{\bf 60} (2005), 
1003--1023.

\bibitem {AK05}M.~Aso, N.~Kenmochi,
Quasivariational evolution inequalities for a class of reaction-diffusion systems,
Nonlinear Anal. 
{\bf 63} (2005), 
e1207--e1217.

\bibitem {Barbu75}V.~Barbu,
Existence theorems for a class of two point boundary problems, 
J.~Diff.~Equ. 
{\bf 17} (1975), 
236--257.

\bibitem {BaPr93}G.I.~Barenblatt, V.M.~Prostokishin,
A mathematical model of damage accumulation taking into account microstructural effects, 
European J.~Appl.~Math. 
{\bf 4} (1993), 
225--240.

\bibitem {BeBi97}M.~Bertsch, P.~Bisegna,
Blow-up of solutions of a nonlinear parabolic equation in damage mechanics, 
European J.~Appl.~Math. 
{\bf 8} (1997), 89--123.

\bibitem{BeDPNi05} M.~Bertsch, R.~Dal Passo, C.~Nitsch,
	\newblock{A system of degenerate parabolic nonlinear PDE's: a
	new free boundary problem},
	\newblock{Interfaces Free Bound.} {\bf 7} (2005), 255--276.

 \bibitem{BoSc04} E.~Bonetti, G.~Schimperna,
	  \newblock{Local existence for Fr\'emond's model of damage in elastic materials},
	  \newblock{Contin.~Mech.~Thermodyn.} {\bf 16} (2004), 319--335. 

 \bibitem{BoScSe05} E.~Bonetti, G.~Schimperna, A.~Segatti,
	 \newblock{On a doubly nonlinear model for the evolution of damaging in viscoelastic materials},
	 \newblock{J.~Diff.~Equ.} {\bf 218} (2005), 91--116.
	 
\bibitem {BL06}G.~Bonfanti, F.~Luterotti,
Well-posedness results and asymptotic behavior for a phase transition model taking into account microscopic accelerations, J.~Math.~Anal.~Appl. 
{\bf 320} (2006), 
95--107.

\bibitem {Fr1}G.~Bonfanti, M.~Fr\'{e}mond, F.~Luterotti, 
Global solution to a nonlinear system for irreversible phase changes,
Adv.~Math.~Sci.~Appl. 
{\bf 10} (2000), 
1--24.

\bibitem {Fr2}G.~Bonfanti, M.~Fr\'{e}mond, F.~Luterotti, 
Local solutions to the full model of phase transitions with dissipation,
Adv.~Math.~Sci.~Appl. 
{\bf 11} (2001), 
791--810.

 \bibitem{HB1} H.~Br\'ezis, 
	 \newblock {\it Operateurs Maximaux Monotones et Semi-Groupes de 
	 Contractions dans les Espaces de Hilbert},   
	 \newblock {Math Studies, Vol.5 North-Holland, Amsterdam/New 
	 York,} 1973.

 \bibitem{HB3} H.~Br\'ezis,
	 \newblock{Monotonicity methods in Hilbert spaces and some
	 applications to non-linear partial differential equations},
	 \newblock{Contributions to Nonlinear Functional Analysis,
	 ed. Zarantonello, E.},   
	 \newblock{Academic Press, New York-London}, 1971, pp.101--156.

	 
\bibitem {Colli}P.~Colli,
On some doubly nonlinear evolution equations in Banach spaces,
Japan J.~Indust.~Appl.~Math. 
{\bf 9} (1992), 
181--203.

\bibitem {CV}P.~Colli, A.~Visintin,
On a class of doubly nonlinear evolution equations,
Comm.~Partial Diff.~Equ. 
{\bf 15} (1990), 
737--756.

\bibitem {Fre01}M.~Fr\'{e}mond, 
 Non-smooth thermomechanics,
Springer-Verlag, Berlin (2002).

\bibitem{GiSa94} U.~Gianazza, G.~Savar\'e,
\newblock{Some results on Minimizing Movements},
\newblock{Rend.~Acc.~Naz.~Sc.~dei XL, Mem.~Mat} {\bf 112} (1994) 57--80.

\bibitem{GiGoSa94} U.~Gianazza, M.~Gobbino, G.~Savar\'e,
\newblock{Evolution problems and Minimizing Movements},
\newblock{Rend.~Mat.~Acc.~Lincei IX} {\bf 5} (1994) 289--296.
	
\bibitem {Gurtin}M.E.~Gurtin,
Generalized Ginzburg-Landau and Cahn-Hilliard equations based on a microforce balance, 
Phys.~D 
{\bf 92} (1996), 
178--192.

\bibitem {Kachanov}L.M.~Kachanov,
Introduction to Continuum Damage Mechanics, 
Martinus Nijhoff (1986).


\bibitem {K-S80}D.~Kinderlehrer, G.~Stampacchia,
An introduction to variational inequalities and their applications,
Pure and Applied Mathematics, 
vol.88, Academic Press, Inc., New York-London (1980).

\bibitem {KRZ13}D.~Knees, R.~Rossi, C.~Zanini,
A vanishing viscosity approach to a rate-independent damage model,
Math.~Models Methods Appl.~Sci. 
{\bf 23} (2013), 
565--616.

\bibitem {KRZ15} D.~Knees, R.~Rossi, C.~Zanini,
A quasilinear differential inclusion for viscous and rate-independent systems in non-smooth domains,
Nonlinear Anal.~Real World Appl.
{\bf 24} (2015),
126--162.

 \bibitem{LM} J.-L.~Lions, E.~Magenes,
	\newblock{\it Non-homogeneous boundary value problems and
	applications. Vol.\,I.},
	\newblock{Translated from the French by P. Kenneth},
	\newblock{Die Grundlehren der mathematischen Wissenschaften} 
	{\bf 181}, Springer-Verlag, New York--Heidelberg, 1972.
	 
 \bibitem{Liu14} Q.~Liu,
	 \newblock{Waiting time effect for motion by positive second derivatives and applications},
	 \newblock{NoDEA Nonlinear Differential Equations Appl.} {\bf 21} (2014), 589--620.
	 
\bibitem {LSS02}F.~Luterotti, G.~Schimperna, U.~Stefanelli,
Local solution to Frémond's full model for irreversible phase transitions,
Mathematical models and methods for smart materials (Cortona, 2001),
Ser.~Adv.~Math.~Appl.~Sci.
\textbf{62} (2002), 
323--328.


\bibitem {MiRo}A.~Mielke, R.~Rossi,
Existence and uniqueness results for a class of rate-independent hysteresis problems,
Math.~Models Methods Appl.~Sci. 
\textbf{17} (2007), 
81--123.

\bibitem {MiRoSt08}A.~Mielke, T.~Roub\'{\i}\v{c}ek, U.~Stefanelli,
$\Gamma$-limits and relaxations for rate-independent evolutionary problems,
Calc.~Var.~Partial Diff.~Equ. 
{\bf 31} (2008), 
387--416.

\bibitem {MiTh}A.~Mielke, F.~Theil,
On rate-independent hysteresis models, 
NoDEA Nonlinear Differential Equations Appl. 
\textbf{11} (2004), 
	 151--189.


\bibitem{NaNiPoSb03} R.~Natalini, C.~Nitsch, G.~Pontrelli and
	S.~Sbaraglia,
	\newblock{A numerical study of a nonlocal model of damage
	propagation under chemical aggression},
	\newblock{European J.~Appl.~Math.} {\bf 14} (2003), 447--464.
\bibitem{Ni05} C.~Nitsch,
	\newblock{A nonlinear parabolic system arising in damage
	mechanics under chemical aggression},
	\newblock{Nonlinear Anal.} {\bf 61} (2005), 695--713.
 \bibitem{Ni06} C.~Nitsch,
	 \newblock{A free boundary problem for nonlocal damage
	 propagation in diatomites, Free boundary problems}, 
	 \newblock{Internat.~Ser.~Numer.~Math.} {\bf 154}, Birkh\"auser,
	 Basel, 2007, 339--349.

\bibitem {Roubicek}T.~Roub\'{\i}\v{c}ek,
Nonlinear partial differential equations with applications, 
International Series of Numerical Mathematics, vol.153, Birkh\"{a}user Verlag, Basel (2005).

\bibitem {RoRo}E.~Rocca, R.~Rossi, 
Entropic solutions to a thermodynamically consistent PDE system for phase transitions and damage,
SIAM J.~Math Anal. 
{\bf 47} (2015), 
2519--2586.

\bibitem {SSS}G.~Schimperna, A.~Segatti, U.~Stefanelli,
Well-posedness and long-time behavior for a class of doubly nonlinear equations,
Discrete Contin.~Dyn.~Syst.
{\bf 18} (2007), 
15--38.

\bibitem {Segatti}A.~Segatti,
Global attractor for a class of doubly nonlinear abstract evolution equations, 
Discrete Contin.~Dyn.~Syst. 
{\bf 14} (2006), 
801--820.

\bibitem {Senba}T.~Senba, 
On some nonlinear evolution equation,
Funkcial Ekvac. 
{\bf 29} (1986), 
243--257.

\bibitem {Si}J.~Simon, 
Compact sets in the space $L^{p}(0,T;B)$, 
Ann.~Mat.~Pura Appl.~(4) 
\textbf{146} (1987), 
65-96.

\bibitem {S08-BE}U.~Stefanelli,
The Brezis-Ekeland principle for doubly nonlinear equations,
SIAM J.~Control Optim. 
{\bf 47} (2008), 
1615--1642.

 \bibitem{St02} U.~Stefanelli,
	 \newblock{On a class of doubly nonlinear nonlocal evolution equations},
	 \newblock{Differential Integral Equ.} {\bf 15} (2002), 897--922.

\end{thebibliography}
\end{document}